\numberwithin{equation}{section}
\newcommand{\clH}{\mathcal{H}}
\def\textmatrix#1&#2\\#3&#4\\{\bigl({#1 \atop #3}\ {#2 \atop #4}\bigr)}
\def\dispmatrix#1&#2\\#3&#4\\{\left({#1 \atop #3}\ {#2 \atop #4}\right)}
\newcommand{\beg}{\begin{equation}}
\newcommand{\eeg}{\end{equation}}
\newcommand{\ben}{\begin{eqnarray*}}
\newcommand{\een}{\end{eqnarray*}}
\newtheorem{thm}{Theorem}[section]
\newtheorem{cor}[thm]{Corollary}
\newtheorem{lem}[thm]{Lemma}
\newtheorem{prop}[thm]{Proposition}
\numberwithin{equation}{section}
\theoremstyle{definition}
\newtheorem{defn}[thm]{Definition}
\newtheorem{eg}[thm]{Example}
\newtheorem{rem}[thm]{Remark}
\def\textmatrix#1&#2\\#3&#4\\{\bigl({#1 \atop #3}\ {#2 \atop #4}\bigr)}
\def\dispmatrix#1&#2\\#3&#4\\{\left({#1 \atop #3}\ {#2 \atop #4}\right)}
\begin{document}
\title[Spectral sets and distinguished varieties in $\Gamma$]
{Spectral sets and distinguished varieties in the symmetrized bidisc}

\author[Sourav Pal]{Sourav Pal}
\address[Sourav Pal]{Department of Mathematics, Ben-Gurion University of the Negev, Be'er Sheva-84105, Israel.}
\email{sourav@math.bgu.ac.il}

\author[Orr Shalit]{Orr Moshe Shalit}
\address[Orr Moshe Shalit]{Department of Mathematics, Ben-Gurion University of the Negev, Be'er Sheva-84105, Israel.}
\email{oshalit@math.bgu.ac.il}

\keywords{Symmetrized bidisc, Distinguished varieties, Spectral
set, Fundamental operator, von-Neumann's inequality, Complete
spectral set}

\subjclass[2010]{47A13, 47A20, 47A25, 47A45}
\thanks{The first author was supported in part by a postdoctoral fellowship funded in part by the Skirball Foundation via the Center for Advanced Studies in Mathematics at Ben-Gurion University of the Negev. The second author is partially supported by ISF Grant no.
474/12, by EU FP7/2007-2013 Grant no. 321749, and by GIF Grant no.
2297-2282.6/20.1.}

\begin{abstract}
We show that for every pair of matrices $(S,P)$, having the closed
symmetrized bidisc $\Gamma$ as a spectral set, there is a one
dimensional complex algebraic variety $\Lambda$ in $\Gamma$ such
that for every matrix valued polynomial $f(z_1,z_2)$,
$$\|f(S,P)\|\leq \max_{(z_1,z_2)\in \Lambda}\|f(z_1,z_2)\|.$$
The variety $\Lambda$ is shown to have the determinantal
representation
$$\Lambda = \{(s,p) \in \Gamma : \det(F + pF^* - sI) = 0\} ,$$
where $F$ is the unique matrix of numerical radius not greater than 1 that
satisfies
$$S-S^*P=(I-P^*P)^{\frac{1}{2}}F(I-P^*P)^{\frac{1}{2}}.$$
When $(S,P)$ is a strict $\Gamma$-contraction, then $\Lambda$ is a
{\em distinguished variety} in the symmetrized bidisc, i.e. a one
dimensional algebraic variety that exits the symmetrized bidisc
through its distinguished boundary. We characterize all
distinguished varieties of the symmetrized bidisc by a
determinantal representation as above.
\end{abstract}

\maketitle

\section{Introduction and notations}

In this paper, we contribute to the understanding of the
relationship between the complex geometry of a domain or a variety
in $\mathbb C^2$ and the properties of commuting operator pairs on
a Hilbert space having that domain or variety as a spectral set.
Additionally, we add to the accumulation of interesting phenomena related to
model theory and dilation theory in finite dimensions \cite{AM05,LS13,MS13}.  A
principal source of inspiration for us is the following sharpening
of Ando's inequality:

\begin{thm}[Agler and M\raise.45ex\hbox{c}Carthy, \cite{AM05}]\label{thm:AMvNineq}
Let $T_1,T_2$ be two commuting contractive matrices, neither of
which has eigenvalue of unit modulus. Then there is a one
dimensional variety $V \subseteq \mathbb D^2$ (so called {\em
distinguished variety}) such that for every polynomial $p$ in two
variables
\[
\|p(T_1,T_2)\| \leq \sup_{(z_1, z_2) \in V}|p(z_1,z_2)| .
\]
\end{thm}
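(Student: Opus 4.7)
\emph{Proof proposal.} The plan is to encode the commuting pair $(T_1, T_2)$ into a rational matrix-valued inner function $\Psi$ on $\mathbb D$, to use $\Psi$ to produce both a distinguished variety $V \subset \mathbb D^2$ and a normal commuting dilation $(N_1, N_2)$ whose joint spectrum is constrained to lie on $\overline V \cap \mathbb T^2$, and finally to deduce the polynomial inequality from the spectral theorem for the normal pair combined with a maximum-modulus argument on the 1-dimensional variety $V$.

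As a first reduction, I would pass to a complementary summand and assume that both $T_1$ and $T_2$ are completely non-unitary and strictly contractive; the no-boundary-eigenvalue hypothesis makes this possible in finite dimensions. Next, realize $T_2$ via its Sz.-Nagy--Foias functional model: $T_2$ is unitarily equivalent to $P_{\mathcal H(\Theta_2)} M_z|_{\mathcal H(\Theta_2)}$ on $\mathcal H(\Theta_2) = H^2(\mathcal D_{T_2^*}) \ominus \Theta_2 H^2(\mathcal D_{T_2})$, where the characteristic function $\Theta_2$ is rational inner (rationality because we are in finite dimensions). Because $T_1$ commutes with $T_2$, commutant lifting produces an analytic contractive $\Psi : \mathbb D \to B(\mathcal D_{T_2^*})$ with $T_1 = P_{\mathcal H(\Theta_2)} M_\Psi|_{\mathcal H(\Theta_2)}$; one then invokes the finite-dimensionality together with the absence of unimodular spectrum on both factors to upgrade $\Psi$ to a rational inner matrix function. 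This last upgrade is the main technical step.

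Having $\Psi$ in hand, define
\[
V = \{(z_1, z_2) \in \mathbb D^2 : \det(\Psi(z_2) - z_1 I) = 0\}.
\]
Rationality of $\Psi$ makes $V$ a 1-dimensional algebraic variety, while unitarity of $\Psi(z_2)$ for $z_2 \in \mathbb T$ forces the eigenvalues $z_1$ of $\Psi(z_2)$ to lie on $\mathbb T$, so $\overline V \setminus V \subset \mathbb T^2$; this is the distinguished-variety condition. Now form the normal commuting pair $N_1 = M_\Psi$, $N_2 = M_z$ on $L^2(\mathbb T; \mathcal D_{T_2^*})$; its joint spectrum is contained in $\overline V \cap \mathbb T^2$. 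The functional model identification of $(T_1, T_2)$ as a compression yields $\|p(T_1, T_2)\| \leq \|p(N_1, N_2)\|$ for every polynomial $p$, and the spectral theorem together with the maximum principle applied to the analytic function $p|_V$ on the bordered Riemann surface $V \cup (\overline V \cap \mathbb T^2)$ gives $\|p(N_1, N_2)\| = \sup_{\overline V \cap \mathbb T^2} |p| = \sup_V |p|$, closing the chain of inequalities.

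The main obstacle is the upgrade of the commutant-lifting function $\Psi$ from merely contractive-analytic to rational inner: this is where both the finite-dimensionality of $T_1, T_2$ and the no-unimodular-eigenvalue hypothesis enter essentially, because without them $V$ would not be algebraic or not be distinguished. One can alternatively approach this step through the Berger--Coburn--Lebow structure theorem applied to an isometric dilation of $(T_1, T_2)$, or via a direct determinantal construction built from the defect operators of $T_1$ and $T_2$ and the commutation relation, at the cost of additional bookkeeping in either direction.
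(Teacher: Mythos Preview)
The paper does not prove Theorem~\ref{thm:AMvNineq}: it is quoted from \cite{AM05} as motivation, and no argument for it appears anywhere in the text. There is therefore no ``paper's own proof'' to compare your proposal against.

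As a reconstruction of the Agler--M\raise.45ex\hbox{c}Carthy argument your outline is in the right spirit, but the step you yourself flag as the main obstacle is left as a black box, and that is exactly where the content lies. Commutant lifting applied to the Sz.-Nagy--Foias model of $T_2$ only produces a \emph{contractive} analytic symbol $\Psi$; neither rationality nor innerness follows from the fact that $T_1,T_2$ act on a finite-dimensional space, since the model space $H^2(\mathcal D_{T_2^*})$ is infinite-dimensional and the lift is not unique. In the original argument one instead works on the adjoint side and builds a co-isometric extension: dilate $T_2^*$ to the shift $M_z$ on $H^2(\mathcal D_{T_2})$, and then show that $T_1^*$ extends to a commuting isometry $M_\Psi$ with $\Psi$ rational inner. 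The finite defect and the absence of unimodular eigenvalues are used precisely here, to guarantee that the extended pair is a pair of pure commuting isometries with finite multiplicity, whence the Berger--Coburn--Lebow / Sz.-Nagy--Foias structure forces $\Psi$ to be matrix inner and rational. Your closing paragraph mentions this route as an ``alternative''; in fact it is the substantive part of the proof, and the commutant-lifting formulation you lead with does not by itself deliver the inner upgrade.
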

The main aim of this paper is to obtain an analogous result in the
{\em symmetrized bidisc} setting. En route we obtain a
characterization of the distinguished varieties in the symmetrized
bidisc. Interestingly, a significant portion of the proofs and results are
different from the bidisc case. To explain more precisely the
background and main results, we begin by listing a few notations
that will be used in sequel.
\begin{itemize}
\item [$\bullet$] $\mathbb D$, $\mathbb T$ : the open unit disk
and the unit circle of the complex plane; \item [$\bullet$]
$\partial X$ will have two meanings:
\begin{enumerate}
\item for a set $X\subseteq \mathbb C^d$ with non-empty interior,
$\partial X$ denotes the topological boundary of $X$;
\item if $X$
is a variety inside some specified domain $U$, then $\partial X :=
\partial U \cap \overline{X}$;
\end{enumerate}
\item [$\bullet$] $bX\,$: the $\check{\textup{S}}$ilov boundary of
$X$.
\item [$\bullet$] $\mathcal L(\mathcal H)$ : the
algebra of bounded operators on a Hilbert space $\mathcal H$;
\item $l^2(E)$ : the Hilbert space of square summable sequences of
vectors from the Hilbert space $E$;
\item[$\bullet$] $H^2(\mathbb D)$ :
the Hardy space of the disc consisting of holomorphic
functions from $\mathbb D$ to $\mathbb C$ with square summable
coefficients;
\item[$\bullet$] $M_z$ : the Unilateral Shift operator
on $H^2(\mathbb D)$;
\item[$\bullet$] $H^2(E)$ : the vectorial
Hardy space consisting of holomorphic functions from $\mathbb D$
to the Hilbert space $E$ with square summable coefficients;
\item[$\bullet$] $T_{\varphi}$ : Toeplitz operator with symbol
$\varphi$;
\item[$\bullet$] $\sigma(T_1,T_2,\cdots,T_d)$ : the
Taylor joint spectrum of a commuting $d$-tuple of operators
$(T_1,T_2,\cdots,T_d)$;
\item[$\bullet$] $P_{\mathcal H}$:
orthogonal projection onto the space $\mathcal H$;
\item[$\bullet$] $\textup{Ran} T,\;\overline{\textup{Ran}}T$ :
range and range closure of an operator $T$;
\item[$\bullet$]
$D_P=(I-P^*P)^{\frac{1}{2}}$: defect operator of a contraction $P$;
\item[$\bullet$] $\mathcal
D_P=\overline{\textup{Ran}}(I-P^*P)^{\frac{1}{2}}$ : defect space
of a contraction $P$;
\item[$\bullet$] $\omega(T)$ : the
\textit{numerical radius} of an operator $T$ on a Hilbert space
$\mathcal H$ which is defined as $\omega(T) = \sup \{|\langle Tx,x
\rangle|\; : \; \|x\|_{\mathcal{H}} = 1\}.$
\item[$\bullet$] $\pi$ : the
\textit{symmetrization map} defined as $\pi(z_1, z_2) = (z_1+z_2,z_1 z_2)$. 
\end{itemize}

Following the notion introduced by von-Neumann, we say that a
compact set $K\subseteq \mathbb C^d$ is a spectral set for a
$d$-tuple of commuting bounded operators
$\underline{T}=(T_1,T_2,\cdots,T_d)$ defined on a Hilbert space
$\mathcal H$ if $\sigma(\underline{T})\subseteq K$ and the
inequality
$$ \|f(\underline{T})\|\leq \sup_{\underline z \in K}|f(\underline
z)|, \quad \underline z=(z_1,z_2,\cdots,z_d)
$$
holds for every rational function $f$ in $d$-variables with poles
off $K$. Furthermore, $K$ is said to be a complete spectral set
for $\underline{T}$ if for every matrix valued rational function
$f$ in $d$-variables, $$\|f(\underline{T})\|\leq \sup_{\underline
z\in K} \|f(\underline z)\|.
$$
Here $f=[f_{ij}]_{m\times n}$, where each $f_{ij}$ is a scalar valued
rational function in $d$-variables with poles off $K$ and
$f(\underline{T})$ denotes the operator from $\mathcal H^n$ to
$\mathcal H^m$ with block matrix
$[f_{ij}(\underline{T})]_{m\times n}$.

Subsets of $\mathbb C^d$ that are spectral sets or complete
spectral sets for a $d$-tuple of commuting operators, have been
studied for decades and many remarkable results have been
obtained, \cite{BadeaBeckermann,paulsen}. This paper concerns the closed
symmetrized bidisc $\Gamma \subseteq \mathbb C^2$ and a class of
one dimensional algebraic varieties in $\Gamma$ as spectral sets
and complete spectral sets. The set $\Gamma$ and its interior, the
symmetrized bidisc $\mathbb G$, are defined in the following way:
\begin{align*} & \Gamma = \{(z_1+z_2,z_1z_2): \; |z_1|\leq 1, |z_2|\leq 1  \}\subseteq \mathbb C^2
\quad \textup{and}\\& \mathbb G =\{ (z_1+z_2,z_1z_2): \; |z_1|< 1,
|z_2|<1 \}.
\end{align*}
The \textit{distinguished boundary} of the symmetrized bidisc is
denoted by b$\Gamma$ and is defined by
$${b}\Gamma =\{
(z_1+z_2,z_1z_2)\,:\; |z_1|= 1, |z_2|=1  \}\subseteq \Gamma.
$$
It is the $\check{\textup{S}}$ilov boundary of $A(\Gamma)$, the
algebra of continuous complex-valued functions on $\Gamma$ which
are analytic in the interior $\mathbb G$. Clearly, the points of
$\mathbb G$, $\Gamma$ and $b\Gamma$ are the symmetrization of the
points of the bidisc $\mathbb D^2$, the closed bidisc
$\overline{\mathbb D}^2$ and the torus $\mathbb T^2$,
respectively, where the symmetrization map is the following:
$$ \pi: \mathbb C^2 \rightarrow \mathbb C^2, \quad \pi(z_1,z_2) = (z_1+z_2,z_1z_2).$$

Function theory and operator theory related to the set $\Gamma$ have been
studied over past three decades (e.g. \cite{ALY13, ay-jfa, ay-ems, ay-blm, ay-jot, ay-jga,
ay-caot, tirtha-sourav1, tirtha-sourav, Sarkar}).

\begin{defn} A one dimensional algebraic variety set $W\subset \mathbb G$ is said to be a
\emph{distinguished variety in the symmetrized bidisc} if
$\partial W := \overline{W}\cap\partial \mathbb G =\overline{W}\cap b\Gamma$.
\end{defn}
A distinguished variety of the symmetrized bidisc
exits the symmetrized bidisc through its distinguished boundary.

The notion of distinguished variety was introduced by Agler and
M\raise.45ex\hbox{c}Carthy in the bidisc setting in \cite{AM05}. A
{\em distinguished variety in the bidisc} $\mathbb{D}^2$ is an
algebraic variety $V \subset \mathbb{D}^2 $ such that $\partial V
= \overline{V} \cap \partial (\mathbb{D}^2) = \overline{V}\cap
\mathbb{T}^2$. Distinguished varieties
in the bidisc have been investigated further by several researchers (see, e.g., \cite{AM06,JKM12,Knese10}).

In Lemma \ref{lem:DVA}, we show that the points in
a distinguished variety $W$ in $\mathbb G$ are symmetrization of
the points of a distinguished variety $V$ in $\mathbb D^2$ and
vice-versa.

In Theorem 1.12 of \cite{AM05}, Agler and
M\raise.45ex\hbox{c}Carthy gave the following
characterization of distinguished varieties in $\mathbb D^2$:

\begin{thm}[Agler-M\raise.45ex\hbox{c}Carthy, \cite{AM05}]\label{thm:AMDVchar}
A set $V \subset \mathbb{D}^2$ is a distinguished variety in the
bidisc if and only if there is a rational matrix valued inner
function $\psi$ on $\mathbb{D}$ such that
\[
V = \{(z,w) \,\big|\, \det(\psi(z) - wI) =  0\}.
\]
\end{thm}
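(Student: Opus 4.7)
The argument has two directions. The $(\Leftarrow)$ direction is a function-theoretic contraction argument, while $(\Rightarrow)$ requires a model-theoretic construction of $\psi$ from the compression of multiplication by $w$ to a Hardy-type space on $V$.

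For the $(\Leftarrow)$ direction, let $\psi$ be a rational $n\times n$ matrix inner function on $\mathbb{D}$ and put $V = \{(z,w) : \det(\psi(z)-wI)=0\}$. Clearing denominators in the entries of $\psi$ turns $\det(\psi(z)-wI)$ into a polynomial in $(z,w)$, so $V$ is an algebraic set. For $|z|<1$, $\psi(z)$ is a contraction, so its eigenvalues lie in $\overline{\mathbb{D}}$; if $\psi(z_0)v=\lambda v$ with $|\lambda|=1$ and $|z_0|<1$, then the maximum modulus principle applied to $z\mapsto \langle \psi(z)v,v\rangle/\|v\|^2$ forces $\lambda$ to be a constant eigenvalue throughout $\mathbb{D}$, a degenerate factor that one can separate off. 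On $\mathbb{T}$, $\psi(z)$ is unitary, so its eigenvalues lie on $\mathbb{T}$. Hence $V\subseteq \mathbb{D}^2$ and $\overline{V}\cap \partial\mathbb{D}^2 \subseteq \mathbb{T}^2$, which is exactly the definition of a distinguished variety.

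For the $(\Rightarrow)$ direction, given a distinguished variety $V$, the plan is to realize $M_w$ as an analytic matrix-valued Toeplitz operator on a vector-valued $H^2$. The coordinate projection $\pi_1:\overline{V}\to \overline{\mathbb{D}}$, $(z,w)\mapsto z$, is proper because $\partial V\subset \mathbb{T}^2$, so it is a branched covering of finite degree $n$. I would choose a probability measure $\mu$ on $\partial V$ (for instance, the pushforward of normalized arc length on $\mathbb{T}$ under local sections of $\pi_1$) and let $H^2(V)$ be the $L^2(\mu)$-closure of $\mathbb{C}[z,w]$. Because $|z|=1$ on $\partial V$, multiplication $M_z$ is isometric on $H^2(V)$; the choice of $\mu$ makes it a pure isometry with wandering subspace of dimension $n$. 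A Wold-type unitary $U:H^2(V)\to H^2(\mathbb{D})\otimes \mathbb{C}^n$ then intertwines $M_z$ with the vector-valued shift $S\otimes I_n$.

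Since $|w|=1$ on $\partial V$ as well, $M_w$ is isometric on $H^2(V)$ and commutes with $M_z$; so $UM_wU^*$ commutes with $S\otimes I_n$, and the classical description of the commutant of the shift gives $UM_wU^* = T_\psi$ for some $\psi\in H^\infty(\mathbb{D}, M_n)$, with isometry of $T_\psi$ forcing $\psi$ to be inner. To identify $V$ with $\{\det(\psi(z)-wI)=0\}$, I would use a reproducing-kernel argument: for $\lambda\in \mathbb{D}$, the subspace $\{k_\lambda\otimes v : v\in \mathbb{C}^n\}$ is $T_\psi^*$-invariant with action $k_\lambda\otimes \psi(\lambda)^*v$, while the joint point spectrum of $(M_z^*, M_w^*)$ on $H^2(V)$ equals $\{(\bar z,\bar w):(z,w)\in V\}$ via evaluation functionals on $V$. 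Matching eigenvalues yields the determinantal form, and rationality of $\psi$ follows from the algebraicity of $V$. The main obstacle I anticipate is the purity and multiplicity analysis of $M_z$: showing that a suitable $\mu$ makes $M_z$ pure with wandering subspace of dimension exactly $n$ (the cover degree) rests on algebraicity of $V$, properness of $\pi_1$, and density of polynomials in $H^2(V)$. Once those ingredients are in hand, the Wold decomposition, the shift-commutant description, and the kernel-theoretic spectrum identification complete the proof.
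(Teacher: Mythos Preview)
This theorem is not proved in the present paper; it is quoted from Agler and M\raise.45ex\hbox{c}Carthy \cite{AM05} (their Theorem~1.12), so there is no ``paper's own proof'' to compare against. Your outline is essentially the strategy of the original Agler--M\raise.45ex\hbox{c}Carthy argument, and it is also the template this paper adapts for the symmetrized bidisc analogue (Theorem~\ref{thm:DVchar}): construct a measure on the distinguished boundary of the variety so that evaluation functionals are bounded and dense (their Lemma~1.2, imported here as Lemma~\ref{basiclem2}), show the coordinate multiplications form a pure isometric pair on the resulting Hardy space, invoke a model theorem to pass to a vector-valued $H^2$, and read off the determinantal representation from the joint point spectrum of the adjoints acting on kernel functions.

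Two remarks on your sketch. First, in the $(\Leftarrow)$ direction your maximum-modulus argument for ruling out unimodular eigenvalues in the interior is correct, but the phrase ``a degenerate factor that one can separate off'' hides the fact that if $\psi$ genuinely has a constant unimodular eigenvalue then $V$ as written is \emph{not} contained in $\mathbb{D}^2$; one must either exclude this case in the hypothesis or replace $V$ by $V\cap\mathbb{D}^2$. Second, the obstacle you flag --- purity of $M_z$ and identification of the wandering-subspace dimension with the covering degree --- is exactly where the work lies in \cite{AM05}; Agler and M\raise.45ex\hbox{c}Carthy handle it by building $\mu$ from harmonic measure on the desingularized Riemann surface of $V$, which makes both purity and the multiplicity count transparent. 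Your proposed ``pushforward of arc length under local sections'' is morally the same object but would need care at branch points.
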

In Theorem \ref{thm:DVchar}, our first main result of this paper,
we establish the fact that a distinguished variety $W$ in $\mathbb
G$ has the representation
\begin{equation}\label{reprentn}
W = \{(s,p)\in\mathbb G:\,\det (A+pA^*-sI) = 0\} ,
\end{equation}
where $A$ is some matrix with $\omega(A)\leq 1$. Moreover, every subset
$W$ of the above form is a distinguished variety in $\mathbb G$
provided that $\omega(A)<1$. Examples show that a set $W$ of the form
(\ref{reprentn}) with $\omega(A)=1$ may or may not be a
distinguished variety in $\mathbb G$. It is somewhat surprising
that this representation of a distinguished variety in $\mathbb G$
has simpler form than the one in $\mathbb D^2$, described in
Theorem
\ref{thm:AMDVchar}.\\

Let $\underline{T}=(T_1,T_2,\cdots,T_d)$ be a commuting $d$-tuple
of operators such that $\sigma(\underline{T}) \subseteq X$. A {\em
normal $bX$-dilation} of $\underline{T}$ is a commuting $d$-tuple
$\underline{N}=(N_1,\cdots,N_d)$ of normal operators on a larger
Hilbert space $\mathcal K \supseteq \mathcal H$ such that
$\sigma(\underline{N})\subseteq bX$ and
$q(\underline{T})=P_{\mathcal H}q(\underline{N})|_{\mathcal H}$,
for any polynomial $q$ in $d$-variables $z_1,\dots,z_d$. A
celebrated theorem of Arveson states that $\underline T$ has a
normal $bX$-dilation if and only if $X$ is a complete
spectral set of $\underline T$ (Theorem 1.2.2 and its corollary,
\cite{arveson2}). In particular, a necessary condition for $\underline
T$ to have a normal $bX$-dilation is that $X$ be a spectral set
for $\underline T$. A natural question is: {\em when is this condition
sufficient?} In other words, fixing $X \subset \mathbb{C}^d$, one
can ask when does the fact that $X$ is spectral set for
$\underline T$ implies that $\underline T$ has a normal
$bX$-dilation. This question was investigated in several contexts,
and it has been shown to have a positive answer when
$X=\overline{\mathbb D}$ \cite{nagy}, when $X$ is an annulus
\cite{agler-ann}, when $X=\overline{\mathbb D^2}$ \cite{ando} and
when $X=\Gamma$ \cite{ay-jfa, tirtha-sourav}. Also we have failure
of rational dilation on a triply connected domain in $\mathbb C$
\cite{ahr, DM}.

We now define the operator pairs of interest of this paper.

\begin{defn}
A $\Gamma$-\textit{contraction} is pair of commuting operators
$(S,P)$ defined on a Hilbert space $\mathcal H$ for which $\Gamma$
is a spectral set.
\end{defn}
Agler and Young proved that a pair $(S,P)$ is a $\Gamma$-contraction if and only if
$$ \|q(S,P)\|\leq \sup_{z\in \Gamma}
|q(z)|.
$$
for every polynomial $q$ in 2 variables (see \cite{ay-ems}); thus
the definition can be simplified so that it does not involve the
joint spectrum $\sigma(S,P)$ nor rational functions.

It is clear from the definition that if $(S,P)$ is a
$\Gamma$-contraction then so is $(S^*,P^*)$ and $\|S\|\leq 2,
\|P\|\leq 1$. In \cite{ay-jfa}, Agler and Young showed that if
$\Gamma$ is a spectral set for $(S,P)$, then it is a complete
spectral set for $(S,P)$, too. To do that, they introduced the
following operator pencil
\begin{align}
\label{pencil}\rho(S,P)=2(I-P^*P)-(S-S^*P)-(S^*-P^*S),
\end{align}
and proved that $\Gamma$ is a spectral set or complete spectral
set for $(S,P)$ if and only if $\rho(\alpha S, {\alpha}^2P)\geq
0$, for all $\alpha \in\mathbb D$ (Theorem 1.2, \cite{ay-jfa}). It
is instructive to compare the equivalence of conditions
\[
(S,P) \textrm{ is a } \Gamma\textrm{-contraction}\,
\Longleftrightarrow \, \rho(\alpha S, \alpha^2 P) \geq 0,
\]
to the equivalence
\[
T \textrm{ is a } \textrm{contraction}\, \Longleftrightarrow \, I
- TT^* \geq 0.
\]
Following the above analogy, we say that a pair of commuting contractions
 $(S,P)$ is a
\textit{strict $\Gamma$-contraction} if there is a positive number
$c$ such that $\rho(\alpha S,{\alpha}^2P)\geq cI$ for all $\alpha
\in \overline{\mathbb D}$. \\

The same fact that $\Gamma$ is a complete spectral set for a
$\Gamma$-contraction $(S,P)$, has been established by
Bhattacharyya, Pal and Shyam Roy by constructing an explicit
$\Gamma$-isometric dilation of $(S,P)$ (Theorem 4.3 of
\cite{tirtha-sourav}). To construct such a $\Gamma$-isometric
dilation, they introduced the notion of fundamental operator of a
$\Gamma$-contraction. The fundamental operator of a
$\Gamma$-contraction $(S,P)$ is the unique solution of the
operator equation
\begin{align} \label{fundaeqn}
S-S^*P=D_PXD_P, \quad X\in\mathcal L(\mathcal D_P),
\end{align}
where $D_P = (I - P^*P)^{1/2}$ is the defect operator of $P$ and $\mathcal D_P$ the closure of its range. 

Our second main result, Theorem \ref{thm:VN}, is that if
$\Sigma=(S,P)$ is a $\Gamma$-contraction with finite defect index
(i.e., $\dim \mathcal{D}_P < \infty$), such that $P^*$ is pure 
(i.e. $P^n\rightarrow 0$ strongly as $n\rightarrow \infty$), then there is a
one dimensional variety $\Lambda_\Sigma$ in $\mathbb G$, depending
on $\Sigma$, such that $\overline{\Lambda_{\Sigma}}$ is a complete
spectral set for $\Sigma$. The variety $\Lambda_{\Sigma}$ is
precisely given by
$$ \Lambda_{\Sigma}=\{(s,p)\in\Gamma:\; \det (F^*+Fp-sI)=0 \}, $$
where $F$ is the fundamental operator of $(S,P)$. In particular, when $\omega(F)<1$,
we find that $(S,P)$ satisfies a von Neumann type inequality on a distinguished variety.

A corollary of the above result (Corollary \ref{cor:VN}) is that
if $(S,P)$ is a strict $\Gamma$-contraction acting on a finite
dimensional space, then there is a distinguished variety $W
\subset \mathbb G$ such that $\overline{W}$ is a complete spectral
set for $(S,P)$. When $(S,P)$ is not a strict $\Gamma$-contraction
this is no longer true, however there is still a one-dimensional
algebraic subvariety in $\Gamma$ which is a complete spectral set
for $(S,P)$ (see Theorem \ref{last:thm}).

In Section 2, we recall some preliminary results on
$\Gamma$-contractions, along with a few new related results, 
which will be used in subsequent sections.

\section{Operator model in the symmetrized bidisc}

Recall from the introduction that a $\Gamma$-contraction may be defined as follows.
\begin{defn}
A pair of commuting operators $(S,P)$ defined on a Hilbert space
is a $\Gamma$-contraction if for every polynomial $q$ in two complex
variables
$$ \|q(S,P)\|\leq \sup_{(s,p) \in \Gamma}
|q(s,p)| .
$$
\end{defn}

It is easy to write down examples of $\Gamma$-contractions. Indeed, if $T_1,T_2$ are commuting contractions, then their symmetrization
$(T_1+T_2,T_1T_2)$ is a $\Gamma$-contraction.
It is important to note, however, that not all $\Gamma$-contractions arise as symmetrizations of pairs of commuting contractions. If this were so, there would not be much independent interest in studying operator theory on $\Gamma$.

\begin{lem}[\cite{ay-jot}] \label{charsym}
Let $(S,P)$ be a $\Gamma$-contraction. Then
$(S,P)=(T_1+T_2,T_1T_2)$ for a pair of commuting operators
$T_1,T_2$ if and only if $S^2-4P$ has a square root that commutes
with both $S$ and $P$.
\end{lem}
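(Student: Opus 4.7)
The plan is to reduce the equivalence to inverting the symmetrization map pointwise: if $s=z_1+z_2$ and $p=z_1z_2$, then $(z_1-z_2)^2 = s^2-4p$ and $z_{1,2} = \tfrac{1}{2}(s \pm \sqrt{s^2-4p})$. Promoted to operators, this makes $T_1-T_2$ play the role of a square root of $S^2-4P$, while $T_1$ and $T_2$ themselves are recovered by a half-sum/half-difference formula. The $\Gamma$-contraction hypothesis on $(S,P)$ actually plays no role in the argument -- it only sets the context -- so the equivalence is expected to hold for any commuting pair $(S,P)$ of bounded operators.

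For the forward implication, I would start from commuting $T_1,T_2$ with $S=T_1+T_2$ and $P=T_1T_2$, set $R:=T_1-T_2$, and verify by direct expansion that
\[
R^2 = T_1^2 - 2T_1T_2 + T_2^2 = (T_1+T_2)^2 - 4T_1T_2 = S^2-4P.
\]
Commutation of $R$ with both $S$ and $P$ is then automatic, because $R$, $S$, and $P$ all lie in the (commutative) algebra generated by $T_1$ and $T_2$.

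For the converse, given $R \in \mathcal{L}(\mathcal{H})$ with $R^2 = S^2 - 4P$ and $[R,S]=[R,P]=0$, I would define
\[
T_1 := \tfrac{1}{2}(S+R), \qquad T_2 := \tfrac{1}{2}(S-R),
\]
and check the three required properties in turn: $T_1+T_2 = S$ is immediate; $T_1T_2 = \tfrac{1}{4}(S+R)(S-R) = \tfrac{1}{4}(S^2-R^2) = P$ uses $[R,S]=0$ to guarantee that $(S+R)(S-R)$ really equals $S^2-R^2$; and $T_1T_2 = T_2T_1$ follows from the same commutation identity by expanding both products.

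Since the entire argument is purely algebraic and quite short, no single step poses a genuine obstacle; the only thing that needs to be got right is the operator analogue of the quadratic formula and the observation that the hypothesis $[R,S]=0$ is exactly what is needed to make the product $(S+R)(S-R)$ collapse to $S^2-R^2$ as in the scalar case.
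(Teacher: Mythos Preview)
Your argument is correct. The paper does not actually supply a proof of this lemma; it is simply quoted from Agler and Young \cite{ay-jot}, so there is no in-paper proof to compare against. The algebraic argument you give --- taking $R=T_1-T_2$ for the forward direction and $T_{1,2}=\tfrac{1}{2}(S\pm R)$ for the converse --- is exactly the standard one, and your observation that the $\Gamma$-contraction hypothesis is irrelevant to the equivalence (it only sets the context in which the lemma is used) is also correct. One minor remark: in the converse you only invoke $[R,S]=0$, and indeed that alone suffices to obtain $T_1+T_2=S$, $T_1T_2=P$, and $[T_1,T_2]=0$; the additional hypothesis $[R,P]=0$ is then automatic, so the statement could even be slightly sharpened.
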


There are special classes of $\Gamma$-contractions like
$\Gamma$-unitaries, $\Gamma$-isometries, $\Gamma$-co-isometries,
etc., in the literature of $\Gamma$-contractions. They are analogous
to unitaries, isometries, co-isometries, etc., in the theory of
single contractions.
\begin{defn}\label{dboundary} A commuting pair $(S,P)$ is called a
$\Gamma$-{\em unitary} if $S$ and $P$ are normal operators and
$\sigma(S,P)$ is contained in the distinguished boundary
$b\Gamma$.
\end{defn}
\begin{defn}
A commuting pair $(S,P)$ is called a $\Gamma$-{\em isometry} if it
the restriction of $\Gamma$-unitary to a joint invariant subspace
of $S$ and $P$.
\end{defn}
\begin{defn}
A $\Gamma$-co-isometry is the adjoint of a $\Gamma$-isometry, i.e.
$(S,P)$ is a $\Gamma$-co-isometry if $(S^*,P^*)$ is a
$\Gamma$-isometry.
\end{defn}
\begin{defn}
A $\Gamma$-contraction $(S,P)$ acting on a Hilbert space $\mathcal
H$ is said to be pure if $P$ is a pure contraction, i.e.
${P^*}^n\rightarrow 0$ strongly as $n\rightarrow \infty$.
Similarly, a $\Gamma$-isometry $(S,P)$ is pure if $P$ is a pure
isometry. 
\end{defn}

\begin{defn}
Let $(S,P)$ be a $\Gamma$-contraction on a Hilbert space $\mathcal
H$. A commuting pair $(T,V)$ defined on $\mathcal K$ is said to be
a $\Gamma$-isometric (or $\Gamma$-unitary) extension if $\mathcal
H\subseteq \mathcal K$, $(T,V)$ is a $\Gamma$-isometry (or a
$\Gamma$-unitary) and $T|_{\mathcal H}=S,\, V|_{\mathcal H}=P$.
\end{defn}

We now present a structure theorem for the class of
$\Gamma$-isometries and a few characterizations along with it. For
an elaborate proof of the following result, see Theorem 2.14 of
\cite{tirtha-sourav}.
\begin{thm} \label{Gamma-isometry}
Let $S,P$ be commuting operators on a Hilbert space $\mathcal{H}.$
The following statements are all equivalent:\begin{enumerate}
\item $(S,P)$ is a $\Gamma$-isometry; \item if $P$ has
Wold-decomposition with respect to the orthogonal decomposition
$\mathcal H=\mathcal H_1\oplus \mathcal H_2$ such that
$P|_{\mathcal H_1}$ is unitary and $P|_{\mathcal H_2}$ is pure
isometry then $\mathcal H_1,\,\mathcal H_2$ reduce $S$ also and
$(S|_{\mathcal H_1},P|_{\mathcal H_1})$ is a $\Gamma$-unitary and
$(S|_{\mathcal H_2},P|_{\mathcal H_2})$ is a pure
$\Gamma$-isometry; \item $(S,P)$ is a $\Gamma$-contraction and $P$
is isometry;
\item
    $P$ is an isometry\;,\;$S=S^*P$ and $r(S)\leq2$; \item
    $r(S)\leq2$ and $\rho(\beta S,{\beta^2P})=0$ for all
    $\beta\in\mathbb{T}$;

Moreover if the spectral radius $r(S)$ of $S$ is less than $2$
then {\em (1)},{\em (2)},{\em (3)} and {\em (4)} are equivalent
to: \item $(2{\beta}P-S)(2-\beta S)^{-1}$ is an isometry, for all
$\beta\in\mathbb{T}.$
\end{enumerate}
\end{thm}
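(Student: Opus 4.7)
The plan is to establish the equivalence of (1)--(5) via the cycle $(1)\Rightarrow(3)\Rightarrow(4)\Leftrightarrow(5)\Rightarrow(2)\Rightarrow(1)$, and then treat $(4)\Leftrightarrow(6)$ separately under the hypothesis $r(S)<2$.

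The implication $(1)\Rightarrow(3)$ is immediate: the restriction of a $\Gamma$-unitary $(T,U)$ to a joint invariant subspace $\mathcal{H}$ is a $\Gamma$-contraction, and the restriction of a unitary to an invariant subspace is an isometry. For $(3)\Rightarrow(4)$ I would invoke the operator pencil characterization $\rho(\alpha S,\alpha^2 P)\geq 0$ for $\alpha\in\overline{\mathbb{D}}$. When $P^*P=I$, this inequality on $\mathbb{T}$ collapses to
\[
-\alpha(S-S^*P) - \bar\alpha(S-S^*P)^* \geq 0 \qquad (\alpha\in\mathbb{T}),
\]
and letting $\alpha$ range over $\{1,-1,i,-i\}$ forces both the Hermitian and skew-Hermitian parts of $A:=S-S^*P$ to vanish, so $S=S^*P$. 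The bound $r(S)\leq\|S\|\leq 2$ follows from applying the spectral set inequality to the polynomial $p(z_1,z_2)=z_1$. The equivalence $(4)\Leftrightarrow(5)$ is then a direct computation: for $\beta\in\mathbb{T}$,
\[
\rho(\beta S,\beta^2 P) = 2(I-P^*P) - \beta(S-S^*P) - \bar\beta(S^*-P^*S),
\]
which vanishes identically in $\beta$ precisely when $P^*P=I$ and $S=S^*P$.

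The substantive step is $(4)\Rightarrow(2)\Rightarrow(1)$. Applying the Wold decomposition to the isometry $P$, write $\mathcal{H}=\mathcal{H}_1\oplus\mathcal{H}_2$ with $P_1:=P|_{\mathcal{H}_1}$ unitary and $P_2:=P|_{\mathcal{H}_2}$ a pure isometry. Since $\mathcal{H}_1=\bigcap_n P^n\mathcal{H}$ and $SP=PS$, the subspace $\mathcal{H}_1$ is $S$-invariant; using $S^*=P^*S$ together with $P^*\mathcal{H}_1\subseteq\mathcal{H}_1$ (which holds because $\mathcal{H}_2$ is $P$-invariant), $\mathcal{H}_1$ is also $S^*$-invariant, hence reduces both $S$ and $P$. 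On the unitary summand, the identity $S_1=S_1^*P_1$ with $P_1^*=P_1^{-1}$ yields $S_1P_1^*=S_1^*$; combining with $S_1P_1=P_1S_1$ shows that $S_1^*$ commutes with $P_1$, and then $S_1S_1^*=S_1^2P_1^*=P_1^*S_1^2=S_1^*S_1$, so $S_1$ is normal. Gelfand theory applied to the abelian $C^*$-algebra generated by $S_1,P_1$ identifies the joint spectrum as pairs $(s,p)$ with $|p|=1$ and $s=\bar s p$; a direct parametrization $s=\rho e^{i\alpha}$, $p=e^{2i\alpha}$ with $\rho\leq 2$ writes any such pair as $(z+w,zw)$ with $|z|=|w|=1$, placing it in $b\Gamma$, so $(S_1,P_1)$ is a $\Gamma$-unitary. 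For the pure part, I would identify $P_2$ with a vector-valued shift $M_z\otimes I_{\mathcal{E}}$; commutation makes $S_2$ an analytic Toeplitz operator, and the relation $S_2=S_2^*P_2$ pins down its symbol so that $(S_2,P_2)$ dilates explicitly via a bilateral-shift construction, exhibiting it as a pure $\Gamma$-isometry. Then $(2)\Rightarrow(1)$ is formal, since each summand extends to a $\Gamma$-unitary. For $(4)\Leftrightarrow(6)$ under $r(S)<2$, the spectral radius hypothesis makes $2-\beta S$ invertible for every $\beta\in\mathbb{T}$; conjugating the isometry identity $\Phi(\beta)^*\Phi(\beta)=I$ for $\Phi(\beta):=(2\beta P-S)(2-\beta S)^{-1}$ by $(2-\beta S)$ yields, after simplification using $|\beta|=1$, an algebraic identity equivalent to $\rho(\beta S,\beta^2 P)=0$.

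I expect the main obstacle to be the pure-isometry summand in the step $(4)\Rightarrow(2)$: establishing that $(S_2,P_2)$ is genuinely a pure $\Gamma$-isometry requires a concrete model for operators commuting with a pure isometry (the analytic Toeplitz representation) together with an explicit bilateral-shift extension consistent with the algebraic constraint $S_2=S_2^*P_2$, which is more delicate than the abelian spectral-theoretic argument available on the unitary summand.
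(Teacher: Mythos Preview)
The paper does not actually prove this theorem: immediately before the statement it says ``For an elaborate proof of the following result, see Theorem 2.14 of \cite{tirtha-sourav}'', and no argument is supplied in the present paper. So there is no in-paper proof to compare your proposal against; the result is simply imported from Bhattacharyya--Pal--Shyam Roy.

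That said, your outline is essentially the standard route and matches the architecture of the cited proof: the cycle $(1)\Rightarrow(3)\Rightarrow(4)\Leftrightarrow(5)$ via the pencil $\rho$, the Wold decomposition for $(4)\Rightarrow(2)$ with a spectral-theoretic treatment of the unitary summand, and the algebraic identity linking the isometry condition on $(2\beta P-S)(2-\beta S)^{-1}$ to $\rho(\beta S,\beta^2 P)=0$. Your computations for $(3)\Rightarrow(4)$, $(4)\Leftrightarrow(5)$, the normality of $S_1$, and $(4)\Leftrightarrow(6)$ are all correct.

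The one place your sketch is genuinely incomplete --- and you flag it yourself --- is the pure summand in $(4)\Rightarrow(2)$. Concretely: once you identify $P_2$ with $M_z\otimes I_{\mathcal E}$ and write $S_2=T_\Phi$ for some bounded analytic $\Phi$, the relation $S_2=S_2^*P_2$ forces $\Phi(z)=z\Phi(z)^*$ on $\mathbb T$, hence $\Phi(z)=A+A^*z$ for some $A\in\mathcal L(\mathcal E)$. You then need $\omega(A)\leq 1$ (equivalently $\|A+A^*z\|\leq 2$ on $\mathbb T$) to conclude via the bilateral extension that $(T_{A+A^*z},T_z)$ sits inside a $\Gamma$-unitary. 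This follows from $r(S)\leq 2$ only after you know $\|S_2\|\leq 2$, which in turn requires either an a priori norm bound (e.g.\ passing through (3), where $\|S\|\leq 2$ is immediate from the spectral-set property) or a separate argument. Making that step precise is exactly the content you would have to fill in; the paper itself defers it to \cite{tirtha-sourav}.
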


\subsection{The fundamental operator of a $\Gamma$-contraction}

In this subsection, we recall from \cite{tirtha-sourav}, the
notion of the fundamental equation of a pair of commuting
operators $S,P$ with $\|P\|\leq 1$, defined on a Hilbert space
$\mathcal H$. For such a commuting pair, the fundamental equation
is defined in the following way:
\begin{align}\label{fundamental-eqn}
S-S^*P=D_PXD_P, \quad X\in\mathcal L(\mathcal D_P) .
\end{align}
The following result shows that the fundamental equation
(\ref{fundamental-eqn}) has a unique solution when $(S,P)$ is a
$\Gamma$-contraction. We call the unique solution the
\textit{fundamental operator} of the $\Gamma$-contraction.
\begin{thm}\label{fundamentalop}
Let $(S,P)$ be a $\Gamma$-contraction on a Hilbert space $\mathcal
H$. The fundamental equation $S-S^*P=D_PXD_P$ has a unique
solution $F$ in $\mathcal L(\mathcal D_P)$ and $\omega(F)\leq 1$.
\end{thm}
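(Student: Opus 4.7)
The plan is to extract existence and the bound $\omega(F)\le 1$ from a single fundamental inequality: for every real $\theta$, the operator pencil evaluated at $\alpha = e^{i\theta}$ satisfies
\begin{equation*}
\rho(e^{i\theta}S, e^{2i\theta}P) = 2(I - P^*P) - e^{i\theta}(S - S^*P) - e^{-i\theta}(S^* - P^*S) \geq 0,
\end{equation*}
which follows from the characterization of $\Gamma$-contractions by $\rho(\alpha S,\alpha^2 P)\geq 0$ for $\alpha\in\overline{\mathbb D}$ (Theorem 1.2 of \cite{ay-jfa}, quoted in the introduction). Rewriting this as $2D_P^2 \geq 2\,\textup{Re}(e^{i\theta}(S-S^*P))$ and optimizing the phase $\theta$ vectorwise gives the pointwise estimate
\begin{equation*}
|\langle (S-S^*P)h,h\rangle| \leq \|D_P h\|^2 \quad\text{for all } h\in\mathcal H.
\end{equation*}
This will be the single analytic input driving the whole proof.

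Next I would show that $\ker D_P \subseteq \ker(S-S^*P)\cap\ker(S^*-P^*S)$. Given $h$ with $D_P h=0$, I would apply the preceding inequality to $h+\lambda k$ for arbitrary $k\in\mathcal H$ and arbitrary $\lambda\in\mathbb C$; since $\|D_P(h+\lambda k)\|^2=|\lambda|^2\|D_P k\|^2$, expanding the quadratic form in $\lambda$ and letting $|\lambda|\to 0$ (with the phase of $\lambda$ free) forces $\langle (S-S^*P)h,k\rangle=\langle (S-S^*P)k,h\rangle=0$ for every $k$. With this kernel inclusion in hand, the sesquilinear form
\begin{equation*}
B(D_P h, D_P k) := \langle (S-S^*P)h,k\rangle
\end{equation*}
is well defined on $\textup{Ran}(D_P)\times\textup{Ran}(D_P)$.

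The diagonal bound $|B(u,u)|\le \|u\|^2$ we established above is exactly the numerical radius bound. By the standard polarization identity this implies $|B(u,v)|\le 2\|u\|\|v\|$, so $B$ extends by continuity to a bounded sesquilinear form on $\mathcal D_P\times\mathcal D_P$, which is represented by a unique bounded operator $F\in\mathcal L(\mathcal D_P)$ satisfying
\begin{equation*}
\langle F D_P h, D_P k\rangle = \langle (S-S^*P)h,k\rangle \quad\text{for all } h,k\in\mathcal H,
\end{equation*}
that is, $S-S^*P = D_P F D_P$. The diagonal bound transfers to $|\langle Fu,u\rangle|\le \|u\|^2$ for all $u\in\mathcal D_P$, i.e.\ $\omega(F)\le 1$. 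Uniqueness is immediate: any other solution $F'\in\mathcal L(\mathcal D_P)$ would satisfy $\langle F'D_P h, D_P k\rangle = \langle F D_P h, D_P k\rangle$ on the dense subspace $\textup{Ran}(D_P)\subseteq\mathcal D_P$, forcing $F'=F$.

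The main obstacle I would expect is the passage from the numerical-form bound on $\textup{Ran}(D_P)$ to a \emph{genuinely} well-defined bounded operator on $\mathcal D_P$; that requires first handling the kernel of $D_P$, which is where the polarization trick using the $(h+\lambda k)$-substitution is essential. Once that lemma is in place, the rest is a routine Riesz representation argument, and the numerical-radius bound drops out for free from the phase-optimized pencil inequality.
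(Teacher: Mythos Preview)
Your argument is correct. The paper does not supply its own proof of this statement, referring instead to Theorem~4.2 of \cite{tirtha-sourav}; your approach---extracting the diagonal bound $|\langle(S-S^*P)h,h\rangle|\le\|D_Ph\|^2$ from the pencil inequality at $|\alpha|=1$, disposing of $\ker D_P$ by the $(h+\lambda k)$ polarization trick, and then representing the resulting bounded sesquilinear form---is essentially the one given in that reference.
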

For a proof to this see Theorem 4.2 of \cite{tirtha-sourav}.

\begin{rem}
\em{The fundamental operator of a $\Gamma$-isometry or a
$\Gamma$-unitary $(S,P)$ is the zero operator because $S=S^*P$ in
this case}.
\end{rem}

\begin{prop}\label{end-prop}
Let $(S,P)$ and $(S_1,P_1)$ be two $\Gamma$-contractions on a
Hilbert space $\mathcal H$ and let $F$ and $F_1$ be their
fundamental operators respectively. If $(S,P)$ and $(S_1,P_1)$ are
unitarily equivalent then so are $F$ and $F_1$.
\end{prop}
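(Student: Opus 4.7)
The plan is to transport the fundamental equation for $(S,P)$ to the one for $(S_1,P_1)$ via the intertwining unitary and then invoke the uniqueness clause of Theorem \ref{fundamentalop}. Let $U:\mathcal H\to\mathcal H$ be a unitary with $USU^*=S_1$ and $UPU^*=P_1$.

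First I would move the defect operators across $U$. Since
\[
I-P_1^*P_1=I-UP^*U^*UPU^*=U(I-P^*P)U^*,
\]
the operator $U D_P^2 U^* = UD_PU^* \cdot UD_PU^*$ is a positive square root of $I-P_1^*P_1$; by uniqueness of the positive square root, $UD_PU^*=D_{P_1}$. Taking ranges and closing, $U$ maps $\mathcal D_P=\overline{\mathrm{Ran}}\,D_P$ onto $\mathcal D_{P_1}=\overline{\mathrm{Ran}}\,D_{P_1}$, so that $\widetilde U:=U|_{\mathcal D_P}:\mathcal D_P\to\mathcal D_{P_1}$ is a unitary.

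Next I would define the candidate $\widetilde F:=\widetilde U F\widetilde U^*\in\mathcal L(\mathcal D_{P_1})$ and verify that it solves the fundamental equation for $(S_1,P_1)$. A direct computation, using the identifications $UD_PU^*=D_{P_1}$ and $\widetilde U^* = U^*|_{\mathcal D_{P_1}}$, gives for every $h\in\mathcal H$
\[
D_{P_1}\widetilde F D_{P_1}h
= D_{P_1}\,U F (U^* U D_P U^* h)
= U D_P F D_P U^* h
= U(S-S^*P)U^* h
= S_1-S_1^*P_1)h,
\]
so that
\[
S_1-S_1^*P_1 = D_{P_1}\widetilde F D_{P_1}.
\]
Since $\omega(\widetilde F)=\omega(F)\le 1$, the operator $\widetilde F$ is admissible as a solution. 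By the uniqueness statement of Theorem \ref{fundamentalop}, $F_1=\widetilde F=\widetilde U F\widetilde U^*$, which is precisely the assertion that $F$ and $F_1$ are unitarily equivalent (via $\widetilde U$).

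There is no real obstacle in this argument; the only point requiring a little care is the bookkeeping between operators on $\mathcal H$ and operators on the defect spaces $\mathcal D_P,\mathcal D_{P_1}$, in particular checking that $U$ genuinely restricts to a \emph{surjective} isometry between them. Once this is in hand, everything reduces to pushing the equation $S-S^*P=D_PFD_P$ through $U$ and appealing to uniqueness.
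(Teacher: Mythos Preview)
Your argument is correct and is exactly the natural route: push the defect operator through $U$ using uniqueness of positive square roots, restrict $U$ to a unitary $\widetilde U:\mathcal D_P\to\mathcal D_{P_1}$, and then invoke the uniqueness clause of Theorem~\ref{fundamentalop} to identify $F_1$ with $\widetilde U F\widetilde U^*$. The paper itself does not supply a proof here but simply refers to \cite[Proposition~4.2]{tirtha-sourav1}, where the same transport-and-uniqueness strategy is carried out.

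Two cosmetic remarks: there is a stray closing parenthesis in the displayed chain (``$= S_1-S_1^*P_1)h$''), and the observation that $\omega(\widetilde F)=\omega(F)\le 1$ is not actually needed for the appeal to uniqueness, since Theorem~\ref{fundamentalop} asserts uniqueness among \emph{all} solutions in $\mathcal L(\mathcal D_{P_1})$, not merely among those of numerical radius at most~$1$.
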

See Proposition 4.2 of \cite{tirtha-sourav1} for a proof.

The following result is a converse to Theorem \ref{fundamentalop}.
\begin{thm}\label{converse}
Let $\hat{F}$ be an operator defined on a Hilbert space $E$ with
$\omega(\hat{F})\leq 1$. Then there is a $\Gamma$-contraction for
which $\hat{F}$ is the fundamental operator.
\end{thm}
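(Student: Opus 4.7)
The plan is to construct $(S,P)$ explicitly as the adjoint of a pure $\Gamma$-isometry on a vector-valued Hardy space. On $H^2(E)$, let $V = M_z \ot I_E$ be the operator-valued unilateral shift and let $T = T_{\hat F^* + \hat F z}$ be the analytic Toeplitz operator with operator-valued symbol $\hat F^* + \hat F z$. Since $T$ is analytic, $T$ and $V$ commute.

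The first task is to verify that $(T,V)$ is a pure $\Gamma$-isometry. I would apply condition (4) of Theorem \ref{Gamma-isometry}: $V$ is manifestly a pure isometry, and the identity $T = T^* V$ falls out of a direct check on the orthonormal basis $\{e z^n : e \in E,\, n \geq 0\}$. The remaining ingredient, $r(T) \leq 2$, uses the norm identity
\[
\|T_{\hat F^* + \hat F z}\| = \sup_{\theta \in [0,2\pi)}\|\hat F^* + \hat F e^{i\theta}\| = 2\omega(\hat F) \leq 2,
\]
which follows from the factorization $\hat F^* + \hat F e^{i\theta} = 2 e^{i\theta/2}\,\mathrm{Re}(e^{i\theta/2}\hat F)$ together with the standard identity $\omega(A) = \sup_{|\zeta|=1}\|\mathrm{Re}(\zeta A)\|$.

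Next, set $(S,P) = (T^*, V^*)$. The class of $\Gamma$-contractions is closed under adjoints, so $(S,P)$ is a $\Gamma$-co-isometry and hence a $\Gamma$-contraction. Since $VV^* = I - P_0$ with $P_0$ the orthogonal projection of $H^2(E)$ onto $E$, one has $D_P = P_0$ and $\mathcal D_P$ is naturally identified with $E$. Evaluating $S - S^*P = T^* - TV^*$ on the basis $\{e z^n\}$: for $n \geq 1$ both terms send $e z^n$ to $\hat F e z^n + \hat F^* e z^{n-1}$ and cancel, while on $E$ one has $T^*e = \hat F e$ and $TV^*e = 0$. Therefore
\[
S - S^*P = P_0 \hat F P_0 = D_P \hat F D_P,
\]
and by the uniqueness clause of Theorem \ref{fundamentalop} the fundamental operator of $(S,P)$ is $\hat F$.

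The main obstacle is the verification that $(T,V)$ is a $\Gamma$-isometry, and in particular the passage from the hypothesis $\omega(\hat F) \leq 1$ to the spectral-radius bound $r(T) \leq 2$. Once this is secured, the identification of $\hat F$ as the fundamental operator of $(S,P)$ is a routine matching on monomials.
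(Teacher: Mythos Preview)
Your proof is correct and follows essentially the same route as the paper: construct the pure $\Gamma$-isometry $(T_{\hat F^* + \hat F z}, T_z)$ on $H^2(E)$ via condition~(4) of Theorem~\ref{Gamma-isometry}, pass to adjoints, and identify the fundamental operator of the resulting $\Gamma$-co-isometry with $\hat F$. The only cosmetic difference is that the paper carries out the computation of $S - S^*P$ via the block-matrix representation on $l^2(E)$ and then invokes Proposition~\ref{end-prop}, whereas you compute directly on monomials and identify $\mathcal D_P$ with $E$ --- your version is slightly more streamlined but otherwise identical in substance.
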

 \begin{proof}
 Let us consider the Hilbert space $H^2(E)$ and the commuting operator pair $(T_{\hat{F}^*+\hat{F}z},T_z)$ acting on it.
 Clearly
 $T_{\hat{F}^*+\hat{F}z}=T_{\hat{F}^*+\hat{F}z}^*T_z$ and $T_z$ is an isometry. Now for $z=e^{2i\theta}\in\mathbb
 T$ we have
 \begin{align*}
 \|\hat{F}^*+\hat{F}e^{2i\theta}\| &=\|e^{-i\theta}\hat{F}^*+e^{i\theta}\hat{F}\|
 \\& =\omega(e^{-i\theta}\hat{F}^*+e^{i\theta}\hat{F}), \textup{ by
 self-adjointness}\\& \leq 2, \textup{ since } \omega(\hat{F})\leq 1.
 \end{align*}
 Therefore by part-(4) of Theorem \ref{Gamma-isometry},
 $(T_{\hat{F}^*+\hat{F}z},T_z)$ is a $\Gamma$-isometry. We now consider the
 $\Gamma$-contraction $(T_{\hat{F}^*+\hat{F}z}^*,T_z^*)$ which is in
 particular a $\Gamma$-co-isometry. We prove that $\hat{F}$ is the
 fundamental operator of $(T_{\hat{F}^*+\hat{F}z}^*,T_z^*)$. Clearly $H^2(E)$ can be identified with the space $l^2(E)$ and
 $(T_{\hat{F}^*+\hat{F}z}^*,T_z^*)$ is unitarily equivalent to the
 multiplication operator pair $(M_{\hat{F}^*+\hat{F}z}^*,M_z^*)$ defined on $l^2(E)$
by
$$ M_{\hat{F}^*+\hat{F}z}=\begin{bmatrix} \hat{F}^*&0&0&\cdots \\
\hat{F} &\hat{F}^*&0&\cdots\\ 0&\hat{F}&\hat{F}^*&\cdots \\
\vdots& \vdots&\vdots&\ddots
\end{bmatrix}, \; M_z=\begin{bmatrix} 0&0&0&\dots\\ I&0&0&\dots\\ 0&I&0&\dots\\ \vdots&\vdots&\vdots&\ddots \end{bmatrix}
\textup{ on } l^2(E),$$ we have
\begin{align*}
& M_{\hat{F}^*+\hat{F}z}^*-M_{\hat{F}^*+\hat{F}z}M_z^*
\\& =\begin{bmatrix} \hat{F}&\hat{F}^*&0&\cdots\\ 0&\hat{F}&\hat{F}^*&\cdots\\
0&0&\hat{F}&\cdots\\ \vdots&\vdots&\vdots&\ddots
\end{bmatrix} - \begin{bmatrix} \hat{F}^*&0&0&\dots\\ \hat{F}&\hat{F}^*&0&\cdots\\ 0&\hat{F}&\hat{F}^*&\cdots\\
\vdots&\vdots&\vdots&\ddots \end{bmatrix}
\begin{bmatrix} 0&I&0&\cdots \\ 0&0&I&\cdots \\ 0&0&0&\cdots \\ \vdots&\vdots&\vdots&\ddots
\end{bmatrix} \\& = \begin{bmatrix} \hat{F}&\hat{F}^*&0&\cdots \\ 0&\hat{F}&\hat{F}^*&\cdots \\
0&0&\hat{F}&\cdots\\ \vdots&\vdots&\vdots&\ddots \end{bmatrix}
- \begin{bmatrix} 0&\hat{F}^*&0&\cdots \\ 0&\hat{F}&\hat{F}^*&\cdots \\
0&0&\hat{F}&\cdots\\ \vdots&\vdots&\vdots&\ddots
\end{bmatrix} \\& = \begin{bmatrix} \hat{F}&0&0&\cdots \\ 0&0&0&\cdots \\ 0&0&0&\cdots \\
\vdots&\vdots&\vdots&\ddots \end{bmatrix}.
\end{align*}
Also
\begin{align*} D_{M_z^*}^2 &=I-M_zM_z^* \\
\\& = \begin{bmatrix} I&0&0&\cdots \\ 0&0&0&\cdots\\
0&0&0&\cdots \\ \vdots&\vdots&\vdots&\ddots
\end{bmatrix} .
\end{align*}
It is clear that $D_{M_z^*}^2=D_{M_z^*}$ and therefore if
$F_1=\begin{bmatrix} \hat{F}&0&0&\cdots \\ 0&0&0&\cdots \\ 0&0&0&\cdots \\
\vdots&\vdots&\vdots&\ddots
\end{bmatrix}$ then $F_1$ is defined on $\mathcal D_{M_z^*}=E\oplus \{0\}\oplus \{0\}\oplus \dots \equiv E$ and
$$M_{\hat{F}^*+\hat{F}z}^*-M_{\hat{F}^*+\hat{F}z}M_z^*=D_{M_z^*}F_1D_{M_z^*}. $$ Therefore
by the uniqueness of fundamental operator, $F_1$ is the
fundamental operator of the $\Gamma$-contraction
$(M_{\hat{F}^*+\hat{F}z}^*,M_z^*)$. Clearly $F_1$ is unitarily
equivalent to $\hat{F}$ on $E$. Now since
$(T_{\hat{F}^*+\hat{F}z}^*,T_z^*)$ on $H^2(E)$ and
$(M_{\hat{F}^*+\hat{F}z}^*,M_z^*)$ on $l^2(E)$ are unitarily
equivalent, so are their fundamental operators by Proposition
\ref{end-prop}. Therefore $\hat{F}$ is the fundamental operator of
$(T_{\hat{F}^*+\hat{F}z}^*,T_z^*)$ on $H^2(E)$.
\end{proof}

\subsection{Model theory for pure $\Gamma$-contractions}

\begin{defn}
 Let $(S,P)$ be a $\Gamma$-contraction on a Hilbert space $\mathcal H$. A pair of commuting
 operators $(T,V)$ defined on a Hilbert space $\mathcal K\supseteq \mathcal H$ is said to be
 a $\Gamma$-\textit{isometric dilation} of $(S,P)$ if $(T,V)$ is a $\Gamma$-isometry and
 $$P_{\mathcal H}(T^mV^n)|_{\mathcal H}=S^mP^n, \quad
 n=0,1,2,\dots .$$ Moreover, the dilation will be called \textit{minimal} if
 \begin{align} \label{idilation} \mathcal K=\overline{\textup{span}}\{ T^mV^nh:\; h\in\mathcal H, m,n=0, 1, 2,
 \cdots\}.
 \end{align}
 A $\Gamma$-\textit{unitary dilation} of $(S,P)$ is defined in a similar way
 by replacing the term $\Gamma$-isometry by $\Gamma$-unitary and
 minimality of such a $\Gamma$-unitary dilation is obtained by varying $m,n$ in (\ref{idilation})
 over all integers.
 \end{defn}

In the dilation theory of a single contraction \cite{nagy}, it
is a notable fact that if $V$ is the minimal isometric dilation of
a contraction $T$ then $V^*$ is a co-isometric extension of $P$.
The (partial) converse is obvious: if $V$ is a co-isometric
extension of $T$ then $V^*$ is an isometric dilation of $T^*$.
Here we shall see that an analogue holds for
$\Gamma$-contractions.

\begin{prop}\label{easyprop3}
Let $(T,V)$ on $\mathcal K\supseteq \mathcal H$ be a
$\Gamma$-isometric dilation of a $\Gamma$-contraction $(S,P)$. If
$(T,V)$ is minimal, then $(T^*,V^*)$ is a $\Gamma$-co-isometric
extension of $(S^*,P^*)$. Conversely, if $(T^*,V^*)$ is a
$\Gamma$-co-isometric extension of $(S^*,P^*)$ then $(T,V)$ is a
$\Gamma$-isometric dilation of $(S,P)$.
\end{prop}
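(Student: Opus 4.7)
The plan is to handle the two directions separately; the converse is purely structural, while the forward direction uses a two-variable version of the classical Sz.-Nagy argument for minimal isometric dilations.

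For the converse, the hypothesis that $(T^*,V^*)$ is a $\Gamma$-co-isometric extension of $(S^*,P^*)$ says precisely that $\mathcal{H}$ is invariant under both $T^*$ and $V^*$ with $T^*|_{\mathcal{H}}=S^*$ and $V^*|_{\mathcal{H}}=P^*$. Writing $T$ and $V$ as $2\times 2$ block operators with respect to the decomposition $\mathcal{K}=\mathcal{H}\oplus\mathcal{H}^\perp$, this invariance forces both to be lower triangular with $(1,1)$ blocks equal to $S$ and $P$ respectively. Products $T^mV^n$ are then again lower triangular with $(1,1)$ block $S^mP^n$, which is exactly the dilation identity $P_{\mathcal{H}}T^mV^n|_{\mathcal{H}}=S^mP^n$. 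Combined with the fact that $(T,V)=((T^*)^*,(V^*)^*)$ is a $\Gamma$-isometry whenever $(T^*,V^*)$ is a $\Gamma$-co-isometry, the conclusion follows.

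For the forward direction, since $(T,V)$ is already a $\Gamma$-isometry its adjoint is automatically a $\Gamma$-co-isometry, so the only substantive issue is to verify that $V^*h=P^*h$ and $T^*h=S^*h$ for every $h\in\mathcal{H}$, equivalently that $\mathcal{H}$ is invariant under $V^*$ and $T^*$. I would fix $h\in\mathcal{H}$ and exploit minimality: since $\mathcal{K}$ is the closed linear span of the vectors $T^mV^nh'$ with $h'\in\mathcal{H}$ and $m,n\geq 0$, it suffices to show that $V^*h-P^*h$ is orthogonal to every such vector. The computation proceeds by commuting $V$ past $T^mV^n$ to rewrite $\langle V^*h,\,T^mV^nh'\rangle$ as $\langle h,\,T^mV^{n+1}h'\rangle$, then invoking the dilation identity together with $h,\,P^*h\in\mathcal{H}$ to reduce both summands of the difference to $\langle P^{*(n+1)}S^{*m}h,\,h'\rangle$, at which point they cancel. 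The identical argument with the roles of $T,S$ and $V,P$ interchanged, using $T\cdot T^mV^n=T^{m+1}V^n$, yields $T^*h=S^*h$.

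I do not anticipate a serious obstacle. The only delicate point is bookkeeping — keeping careful track of which vectors lie in $\mathcal{H}$ so that the dilation identity $P_{\mathcal{H}}T^mV^n|_{\mathcal{H}}=S^mP^n$ can be invoked inside the inner products — but this is entirely routine once one imitates the single-variable scheme and uses the commutativity of $(S,P)$ and $(T,V)$.
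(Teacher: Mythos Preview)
Your proposal is correct and follows essentially the same approach as the paper. The paper organizes the forward direction by first establishing the intertwining relations $SP_{\mathcal H}=P_{\mathcal H}T$ and $PP_{\mathcal H}=P_{\mathcal H}V$ on the spanning vectors $T^mV^nh$ and then deducing $S^*=T^*|_{\mathcal H}$, $P^*=V^*|_{\mathcal H}$ from these; your version unwinds the same computation directly inside the inner products $\langle V^*h-P^*h,\,T^mV^nh'\rangle$, which is a purely cosmetic difference.
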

\begin{proof}
We first prove that $SP_{\mathcal H}=P_{\mathcal H}T$ and
$PP_{\mathcal H}=P_{\mathcal H}V$, where $P_{\mathcal H}:\mathcal
K \rightarrow \mathcal H$ is orthogonal projection onto $\mathcal
H$. Clearly
$$\mathcal K=\overline{\textup{span}}\{ T^{m}V^n h\,\,\big|\,\;
h\in\mathcal H \textup{ and }m,n\in \mathbb N \cup \{0\} \}.$$ Now
for $h\in\mathcal H$ we have
\begin{align*} SP_{\mathcal
H}(T^{m}V^n h)=S(S^{m}P^n h)=S^{m+1}P^n h=P_{\mathcal
H}(T^{m+1}V^n h)=P_{\mathcal H}T(T^{m}V^n h).
\end{align*}
Thus we have $SP_{\mathcal H}=P_{\mathcal H}T$ and similarly we
can prove that $PP_{\mathcal H}=P_{\mathcal H}V$. Also for
$h\in\mathcal H$ and $k\in\mathcal K$ we have
\begin{align*} \langle S^*h,k \rangle
=\langle P_{\mathcal H}S^*h,k \rangle=\langle S^*h,P_{\mathcal H}k
\rangle=\langle h,SP_{\mathcal H}k \rangle=\langle h,P_{\mathcal
H}Tk \rangle=\langle T^*h,k \rangle .
\end{align*}
Hence $S^*=T^*|_{\mathcal H}$ and similarly $P^*=V^*|_{\mathcal
H}$. The converse part is obvious.
\end{proof}

A functional model for pure $\Gamma$-contractions was described in
\cite{tirtha-sourav1} (Theorem 3.1). We state this result here
because we shall use this model to prove Theorem \ref{thm:VN}. We
recall from \cite{nagy} the notion of characteristic function of a
contraction. For a contraction $T$ defined on a Hilbert space
$\mathcal H$, let $\Lambda_T$ be the set of all complex numbers
for which the operator $I-zT^*$ is invertible. For $z\in
\Lambda_T$, the characteristic function of $T$ is defined as
\begin{eqnarray}\label{e0} \Theta_T(z)=[-T+zD_{T^*}(I-zT^*)^{-1}D_T]|_{\mathcal D_T}.
\end{eqnarray} By virtue of the relation $TD_T=D_{T^*}T$ (section
I.3 of \cite{nagy}), $\Theta_T(z)$ maps $\mathcal D_T$ into
$\mathcal D_{T^*}$ for every $z$ in $\Lambda_T$.

\begin{thm}\label{modelthm}
Let $(S,P)$ be a pure $\Gamma$-contraction defined on a Hilbert
space $\mathcal H$. Then the operator pair $(I\otimes
{F_*}^*+M_z\otimes {F_*},\, M_z\otimes I)$ defined on $\mathcal
N=H^2(\mathbb D)\otimes \mathcal D_{P^*}$ is the minimal
$\Gamma$-isometric dilation of $(S,P)$. Here $F_*$ is the
fundamental operator of $(S^*,P^*)$, $M_z$ is the multiplication
operator on $H^2(\mathbb D)$. Moreover, $(S,P)$ is unitarily
equivalent to the pair $(S_1,P_1)$ on the Hilbert space $\mathbb
H_P=(H^2(\mathbb D)\otimes \mathcal D_{P^*})\ominus
M_{\Theta_P}(H^2(\mathbb D)\otimes \mathcal D_P)$ defined as
$S_1=P_{\mathbb H_P}(I\otimes {F_*}^*+M_z\otimes {F_*})|_{\mathbb
H_P}$ and $P_1=P_{\mathbb H_P}(M_z\otimes I)|_{\mathbb H_P}.$ Here
$M_{\Theta_P}$ is the multiplication operator from $H^2(\mathbb
D)\otimes \mathcal D_P$ to $H^2(\mathbb D)\otimes \mathcal
D_{P^*}$ corresponding to the multiplier $\Theta_P$, which is the
characteristic function of $P$.
\end{thm}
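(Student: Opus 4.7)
The plan is to verify first that $(T,V) := (I \otimes F_*^* + M_z \otimes F_*,\, M_z \otimes I)$ is itself a $\Gamma$-isometry, and then to exhibit $\mathcal{H}$ as a co-invariant subspace of $\mathcal{N}$ via the Sz.-Nagy isometric dilation of the pure contraction $P$.

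For the $\Gamma$-isometry check I would apply part~(4) of Theorem \ref{Gamma-isometry}. Clearly $V = M_z \otimes I$ is an isometry, and the identity $T = T^*V$ falls out immediately upon expanding $T^*V$ and using $M_z^*M_z = I$. For the spectral-radius bound $r(T)\leq 2$, the multiplication symbol $F_*^* + e^{2i\theta}F_*$ becomes self-adjoint after absorbing a phase, so its operator norm equals its numerical radius; one then estimates $\omega(e^{-i\theta}F_*^* + e^{i\theta}F_*) \leq 2\omega(F_*) \leq 2$ using Theorem \ref{fundamentalop}, hence $\|T\|\leq 2$.

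For the dilation property, since $P$ is pure the map $W \colon \mathcal{H} \to H^2(\mathbb{D}) \otimes \mathcal{D}_{P^*}$ defined by $Wh = \sum_{n\geq 0} z^n \otimes D_{P^*} P^{*n}h$ is an isometric embedding intertwining $P^*$ with $V^*$. The crucial step, and the main technical obstacle, is to show that $W$ also intertwines $S^*$ with $T^* = I \otimes F_* + M_z^* \otimes F_*^*$. Comparing $z^n$-coefficients on both sides of $WS^*=T^*W$ reduces this, using $P^*S^* = S^*P^*$, to the single identity
\[
D_{P^*} S^* h \;=\; F_* D_{P^*} h + F_*^* D_{P^*} P^* h, \qquad h \in \mathcal{H}.
\]
To establish it I would apply $D_{P^*}$ to both sides: the right-hand side becomes $(S^* - SP^*) + (S - PS^*)P^* = S^* - PS^*P^*$ after invoking both the fundamental equation $S^* - SP^* = D_{P^*} F_* D_{P^*}$ for $(S^*,P^*)$ and its adjoint $S - PS^* = D_{P^*} F_*^* D_{P^*}$, while the left-hand side becomes $D_{P^*}^2 S^* = S^* - PP^*S^*$; the two agree by the commutation $P^* S^* = S^*P^*$. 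Since the original discrepancy lies in $\mathcal{D}_{P^*}$ and is annihilated by $D_{P^*}|_{\mathcal{D}_{P^*}}$, which has trivial kernel, it must vanish. With both intertwinings in hand, $W\mathcal{H}$ is jointly co-invariant under $T$ and $V$, and the dilation property $S^m P^n = P_{\mathcal{H}}(T^m V^n)|_{\mathcal{H}}$ is immediate.

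Minimality is then standard, as $\overline{\textup{span}}\{V^n W\mathcal{H} : n\geq 0\}=\mathcal{N}$ follows from totality of $\{D_{P^*} P^{*n}h : n\geq 0,\, h\in\mathcal{H}\}$ in $\mathcal{D}_{P^*}$ (a consequence of purity of $P$) combined with density of polynomials in $H^2(\mathbb{D})$. For the functional-model assertion I would invoke the Sz.-Nagy--Foias identification of $W\mathcal{H}$ with $\mathbb{H}_P = \mathcal{N}\ominus M_{\Theta_P}(H^2(\mathbb{D})\otimes\mathcal{D}_P)$, under which $P$ becomes $P_{\mathbb{H}_P}(M_z\otimes I)|_{\mathbb{H}_P}$; combining this with the intertwining of $S^*$ established above yields the claimed unitary equivalence of $(S,P)$ with $(S_1,P_1)$.
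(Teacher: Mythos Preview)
The paper does not actually prove Theorem~\ref{modelthm}; it is quoted from \cite{tirtha-sourav1} (Theorem~3.1 there) and used as a black box. Your argument is correct and is essentially the natural reconstruction of that result: the $\Gamma$-isometry check via part~(4) of Theorem~\ref{Gamma-isometry} is exactly the computation carried out in the paper's proof of Theorem~\ref{converse}, and your intertwining identity $D_{P^*}S^* = F_* D_{P^*} + F_*^* D_{P^*} P^*$ together with the injectivity of $D_{P^*}$ on $\mathcal{D}_{P^*}$ is precisely what is needed to upgrade the Sz.-Nagy embedding of the pure contraction $P$ to a $\Gamma$-co-isometric extension of $(S,P)$. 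The minimality and the Sz.-Nagy--Foias identification of $W\mathcal{H}$ with $\mathbb{H}_P$ are standard and correctly invoked.
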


It is interesting to note that the dilation space for the minimal
$\Gamma$-isometric dilation of $(S,P)$ is no bigger than the
dilation space of the minimal isometric dilation of the
contraction $P$, which is surprising because we are concerned with
a commuting multivariable
dilation (this does not hold in the case of two commuting contraction, see, e.g., \cite[Example 7.12]{DK11}).

In Theorem \ref{Gamma-isometry}, we saw that a
$\Gamma$-isometry can be decomposed into two parts of which one is
a $\Gamma$-unitary and the other is a pure $\Gamma$-isometry. Every $\Gamma$-unitary is a
symmetrization of two commuting unitaries (see Theorem 2.5 of \cite{tirtha-sourav}). Therefore, once we have a model for
pure $\Gamma$-isometries, we have a complete picture of
$\Gamma$-isometries. The following theorem gives a model for pure
$\Gamma$-isometries.

\begin{thm}\label{model1}
Let $(S,P)$ be a commuting pair of operators on a separable
Hilbert space $\mathcal H$. If $(S,P)$ is a pure $\Gamma$-isometry
then there is a unitary operator $U:\mathcal H \rightarrow
H^2(\mathcal D_{P^*})$ such that
$$ S=U^*T_{\varphi}U, \textup{ and } P=U^*T_zU, \textup{ where } \varphi(z)= F_*^*+F_*z, $$
$F_*$ being the fundamental operator of $(S^*,P^*)$. Conversely,
every such pair $(T_{A+A^*z},T_z)$ on $H^2(E)$ for some Hilbert
space $E$ with $\omega(A)\leq 1$ is a pure $\Gamma$-isometry.
\end{thm}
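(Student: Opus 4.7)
The plan is to deduce the forward direction from Theorem \ref{modelthm}, exploiting the crucial simplification that the characteristic function of an isometry vanishes, and to establish the converse by verifying part (4) of Theorem \ref{Gamma-isometry} in essentially the same manner as the proof of Theorem \ref{converse}.

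For the forward direction I would first observe that a pure $\Gamma$-isometry is in particular a pure $\Gamma$-contraction, so Theorem \ref{modelthm} applies and identifies $(S,P)$ unitarily with the compression of $(I\otimes F_*^* + M_z\otimes F_*,\, M_z\otimes I)$ on $H^2(\mathbb{D})\otimes \mathcal D_{P^*}$ to the subspace $\mathbb H_P = (H^2(\mathbb D)\otimes \mathcal D_{P^*}) \ominus M_{\Theta_P}(H^2(\mathbb D)\otimes \mathcal D_P)$. The key observation is that $P$ is an isometry by hypothesis, hence $D_P = 0$, which forces the characteristic function $\Theta_P$ defined in \eqref{e0} to vanish identically. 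Thus $M_{\Theta_P}(H^2(\mathbb D)\otimes \mathcal D_P) = \{0\}$, the subspace $\mathbb H_P$ exhausts the whole ambient space, and no compression takes place. Under the canonical identification $H^2(\mathbb D)\otimes \mathcal D_{P^*} \cong H^2(\mathcal D_{P^*})$, the operators $I\otimes F_*^* + M_z\otimes F_*$ and $M_z\otimes I$ become $T_{F_*^* + F_* z}$ and $T_z$ respectively, yielding the required unitary $U$.

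For the converse direction I would start with the pair $(T_{A+A^*z}, T_z)$ on $H^2(E)$ where $\omega(A)\leq 1$. Since $T_z$ is manifestly a pure isometry and the two operators commute, the task reduces to showing that $(T_{A+A^*z}, T_z)$ is a $\Gamma$-isometry, and for this I would invoke condition (4) of Theorem \ref{Gamma-isometry}. The identity $S = S^*P$, namely $T_{A+A^*z} = T_{A+A^*z}^* T_z$, I would verify by a direct block-matrix computation on the standard orthonormal basis of $H^2(E)$: both sides turn out to be block-bidiagonal with $A$ on the diagonal and $A^*$ on the sub-diagonal. To bound the spectral radius I would estimate
\[
\|T_{A+A^*z}\| = \sup_{\theta}\|A + A^* e^{2i\theta}\| = \sup_{\theta}\|e^{-i\theta}A + e^{i\theta}A^*\| \leq 2\omega(A)\leq 2,
\]
which mirrors the calculation at the start of the proof of Theorem \ref{converse}. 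Purity of the resulting $\Gamma$-isometry is automatic, since $T_z$ itself is a pure isometry.

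The main potential obstacle is pinning down the precise form of the multiplier $\varphi(z) = F_*^* + F_* z$ in the forward direction; this is disposed of by the model theorem combined with the vanishing of $\Theta_P$ for isometries, which collapses the otherwise unwieldy model space onto the full Hardy space. Apart from that point the argument is fairly mechanical, and the essence of the converse is already contained in the construction used inside the proof of Theorem \ref{converse}.
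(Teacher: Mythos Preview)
Your proposal is correct and follows essentially the same route as the paper. The paper's proof of the forward direction is a one-line appeal to Theorem~\ref{modelthm}, and you have simply made explicit the reason this works for a pure $\Gamma$-isometry (namely $D_P=0$ forces $\Theta_P\equiv 0$, so $\mathbb H_P$ is the full Hardy space); for the converse the paper refers back to the proof of Theorem~\ref{converse}, which is exactly the verification of condition~(4) of Theorem~\ref{Gamma-isometry} via $T_{A+A^*z}=T_{A+A^*z}^*T_z$ and the numerical-radius estimate you wrote down, followed by the observation that $T_z$ is pure.
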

\begin{proof}
The fact that a pure $\Gamma$-isometry $(S,P)$ can be identified
with the pair $(T_{F_*^*+F_*z,T_z})$ on $H^2(\mathcal D_{P^*})$
follows from the model theorem for pure $\Gamma$-contractions,
Theorem \ref{modelthm}.

For the converse, recall that in the course of the proof of
\ref{converse} we showed that if $\omega(A)\leq 1$ then the pair
of Toeplitz operators $(T_{A+A^*z},T_z)$ on $H^2(E)$ is a
$\Gamma$-isometry. Moreover, $(T_{A+A^*z},T_z)$ is a pure
$\Gamma$-isometry as $T_z$ is pure isometry.
\end{proof}

\section{Representation of a distinguished variety in $\mathbb G$}

A distinguished variety $V$ in $\mathbb D^2$ has the following
determinantal representation
\[ V=\{ (z,w):\;\textup{det}(\Psi(z)-wI)=0 \}, \]
for some rational matrix valued inner function $\Psi$ (Theorem
1.12 of \cite{AM05}). The following proposition shows that every
distinguished variety $W$ in $\mathbb G$ can be obtained from a
distinguished variety in $\mathbb D^2$.

\begin{lem}\label{lem:DVA}
Let $W\subseteq \mathbb G$. Then $W$ is a distinguished variety in
$\mathbb G$ if and only if there is a distinguished variety $V$ in
$\mathbb{D}^2$ such that $W=\pi(V)$.
\end{lem}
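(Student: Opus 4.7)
My plan is to establish both implications by exploiting the structure of the symmetrization map $\pi(z_1,z_2)=(z_1+z_2,z_1z_2)$, which realizes $\mathbb{C}^2$ as its own quotient by the $S_2$-action swapping coordinates. In particular $\pi$ is a finite surjective polynomial morphism: it sends affine algebraic varieties to algebraic varieties of the same dimension and pulls algebraic sets back to algebraic sets. The key geometric input will be that $\pi$ restricts to surjections $\mathbb{D}^2\to\mathbb{G}$, $\overline{\mathbb{D}}^2\to\Gamma$ and $\mathbb{T}^2\to b\Gamma$, together with the observation that the fiber over any $(s,p)\in b\Gamma$ consists of the two roots of $z^2-sz+p$, both of unit modulus. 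Consequently $\pi^{-1}(b\Gamma)\cap\overline{\mathbb{D}}^2=\mathbb{T}^2$, and any point of $\partial\mathbb{D}^2\setminus\mathbb{T}^2$ is necessarily mapped into $\partial\mathbb{G}\setminus b\Gamma$ (equivalently, $\pi(\partial\mathbb{D}^2)\subseteq\partial\mathbb{G}$).

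For the direction ($\Leftarrow$), I would suppose $V=\widetilde V\cap\mathbb{D}^2$ is a distinguished variety in $\mathbb{D}^2$, with $\widetilde V\subset\mathbb{C}^2$ a one-dimensional affine algebraic set, and set $W:=\pi(V)$. Then $\pi(\widetilde V)$ is a one-dimensional algebraic subset of $\mathbb{C}^2$; since a point of $\widetilde V\setminus\mathbb{D}^2$ is never mapped into $\mathbb{G}$ (the multiset $\{z_1,z_2\}$ of roots of $z^2-sz+p$ is determined by $(s,p)$), we get $W=\pi(\widetilde V)\cap\mathbb{G}$, making $W$ a one-dimensional subvariety of $\mathbb{G}$. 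For the boundary condition, compactness of $\overline V\subset\overline{\mathbb{D}}^2$ and continuity of $\pi$ give $\pi(\overline V)=\overline W$; any point of $\overline W\cap\partial\mathbb{G}$ therefore equals $\pi(z_1,z_2)$ for some $(z_1,z_2)\in\overline V\setminus\mathbb{D}^2\subseteq\overline V\cap\partial\mathbb{D}^2$, which by distinguishedness of $V$ lies in $\mathbb{T}^2$, so the image lies in $b\Gamma$.

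For the direction ($\Rightarrow$), I would write $W=\widetilde W\cap\mathbb{G}$ with $\widetilde W\subset\mathbb{C}^2$ a one-dimensional algebraic set, and define $V:=\pi^{-1}(\widetilde W)\cap\mathbb{D}^2$. Then $\pi^{-1}(\widetilde W)$ is algebraic of dimension one, so $V$ is a one-dimensional subvariety of $\mathbb{D}^2$, and $\pi(V)=W$ by surjectivity of $\pi:\mathbb{D}^2\to\mathbb{G}$. For the boundary condition, take $(z_1,z_2)\in\overline V\cap\partial\mathbb{D}^2$: continuity yields $\pi(z_1,z_2)\in\overline W$, while $(z_1,z_2)\in\partial\mathbb{D}^2$ gives $\pi(z_1,z_2)\in\partial\mathbb{G}$ by the remark of the first paragraph, and the hypothesis on $W$ then forces $\pi(z_1,z_2)\in b\Gamma$, so both roots $z_1,z_2$ have modulus one, placing $(z_1,z_2)$ in $\mathbb{T}^2$. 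The only delicate point in either direction is the algebro-geometric bookkeeping concerning the image and preimage of one-dimensional varieties under $\pi$; this reduces to standard properties of the finite quotient morphism $\mathbb{C}^2\to\mathbb{C}^2/S_2\simeq\mathbb{C}^2$ and should pose no real difficulty.
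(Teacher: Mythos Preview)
Your proof is correct and follows essentially the same route as the paper: in both directions one passes between $V$ and $W$ via $\pi$ and $\pi^{-1}$, and the boundary condition is checked using $\pi^{-1}(b\Gamma)\cap\overline{\mathbb{D}}^2=\mathbb{T}^2$. The only stylistic difference is that where you invoke the general fact that the finite quotient morphism $\pi$ sends algebraic sets to algebraic sets, the paper makes this concrete by symmetrizing the defining polynomial of $V$ (replacing $p(z,w)$ by $p(z,w)p(w,z)=q(z+w,zw)$) to exhibit $W$ explicitly as the zero set of $q$ in $\mathbb{G}$.
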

\begin{proof}
Suppose that $V = \{(z,w) \in \mathbb{D}^2 \,:\, p(z,w) = 0\}$ is
a distinguished variety. Define $\tilde{p}(z,w) = p(z,w) p(w,z)$,
and
$$\tilde{V} = \{(z,w) \in \mathbb{D}^2 \,:\, \tilde{p}(z,w) = 0\}.$$
Then $\tilde{V}$ is also a distinguished variety and
$\pi(\tilde{V}) = \pi(V)$. But since $\tilde{p}$ is a symmetric
polynomial, $\tilde{p}(z,w) = q(z+w,zw) = q\circ \pi (z,w)$ for
some polynomial $q$. Letting
$$
W = \{(s,p) \in \mathbb G \,:\, q(s,p) = 0\}
$$
we have that $W = \pi(\tilde{V})$, hence $\pi(V)$ is a variety. As
$\pi$ maps $\mathbb{T}^2$ (and nothing else) onto $b\Gamma$,
$\pi(V)$ is distinguished.

Conversely, let $W = \{(s,p) \in \mathbb G: q(s,p) = 0\}$ be a
distinguished variety in $\mathbb G$. Then $V = \{(z,w) \in
\mathbb{D}^2 : q \circ \pi(z,w) = 0\}$ is a variety which is
mapped onto $W$, and it must be distinguished.
\end{proof}
So, a distinguished variety in $\mathbb G$ has the following
representation:
$$ W=\{ (z+w,zw):\;\textup{det}(\Psi(z)-wI)=0 \}. $$ But this
is not enough for proving the von-Neumann type inequality because not
all $\Gamma$-contractions $(S,P)$ arise as the symmetrization
$(T_1+T_2,T_1T_2)$ of a pair of commuting contractions $T_1, T_2$ (see Lemma \ref{charsym}).
So, we take initiative to provide another
representation of a distinguished variety in $\mathbb G$ as
a determinantal variety in terms
of the natural coordinates in $\mathbb G$.

If $\mu$ is a positive measure on $\partial W$, we denote by
$H^2(\mu)$ the norm closure of polynomials in $L^2(\partial W,
\mu)$. At this point, our aim is to show that for every
distinguished variety $W$ in $\mathbb G$, there is a finite
regular Borel measure $\mu$ on $\partial W$ such that every
point $w \in W$ gives rise to a bounded evaluation functional on
$H^2(\mu)$. It is convenient to denote
\[
ev_w :f \mapsto f(w) \,\, , \,\, f \in H^2(\mu),
\]
and to denote by $k_w$ the function in $H^2(\mu)$ such that
$\langle f, k_w \rangle  = ev_w(f) = f(w)$
for all $f \in
H^2(\mu)$. We will refer to both $ev_w$ and $k_w$ as evaluation
functionals.

\begin{lem}\label{basiclem2}
Let $W$ be a distinguished variety in $\mathbb G$. There exists a
measure $\mu$ on $\partial W$ such that every point in $W$ gives rise to
 a bounded
point evaluation for $H^2(\mu)$, and such that the span of the bounded
evaluation functionals is dense in $H^2(\mu)$.
\end{lem}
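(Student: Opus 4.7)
My plan is to reduce the problem to the bidisc case by pushing forward a measure from the bidisc via the symmetrization map $\pi$, and invoking the corresponding result of Agler and M\raise.45ex\hbox{c}Carthy. By Lemma \ref{lem:DVA}, write $W = \pi(V)$ for some distinguished variety $V \subseteq \mathbb{D}^2$. By the symmetrization trick in the proof of that lemma (replacing the defining polynomial $p(z,w)$ by $p(z,w)p(w,z)$, whose zero set is $V \cup \sigma(V)$), we may further assume that $V$ is invariant under the flip $\sigma(z,w) = (w,z)$. The bidisc analogue of our lemma, proved in \cite{AM05}, furnishes a finite positive Borel measure $\nu_0$ on $\partial V$ for which every point of $V$ is a bounded point evaluation for $H^2(\nu_0)$ and such that the span of these evaluation functionals is dense. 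Replacing $\nu_0$ by $\nu := \tfrac{1}{2}(\nu_0 + \sigma_*\nu_0)$, we obtain a $\sigma$-invariant measure with the same two properties. Finally set $\mu := \pi_* \nu$ on $\partial W$.

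The change-of-variables identity
$$\int_{\partial W} |q|^2 \, d\mu = \int_{\partial V} |q \circ \pi|^2 \, d\nu$$
shows that $q \mapsto q \circ \pi$ extends to an isometry $J: H^2(\mu) \to H^2(\nu)$. Since $\nu$ is $\sigma$-invariant, $\sigma$ acts unitarily on $H^2(\nu)$, and a polynomial-approximation argument (average any approximating sequence against $\sigma$) identifies the range of $J$ with the $\sigma$-invariant subspace of $H^2(\nu)$. For bounded point evaluation at $w \in W$, pick any $z \in V$ with $\pi(z) = w$; then for every polynomial $q$,
$$|q(w)|^2 = |(q \circ \pi)(z)|^2 \leq \|k_z^\nu\|^2 \cdot \|q \circ \pi\|_{H^2(\nu)}^2 = \|k_z^\nu\|^2 \cdot \|q\|_{H^2(\mu)}^2,$$
so $q \mapsto q(w)$ extends to a functional $k_w^\mu \in H^2(\mu)$, and a short adjoint computation yields $J^* k_z^\nu = k_w^\mu$.

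For density of the evaluation functionals, suppose $f \in H^2(\mu)$ is orthogonal to every $k_w^\mu$ with $w \in W$. Then for each $z \in V$,
$$\langle Jf, k_z^\nu \rangle_{H^2(\nu)} = (Jf)(z) = f(\pi(z)) = \langle f, k_{\pi(z)}^\mu\rangle_{H^2(\mu)} = 0,$$
so $Jf \perp k_z^\nu$ for all $z \in V$. Density of $\{k_z^\nu : z \in V\}$ in $H^2(\nu)$ forces $Jf = 0$, and then $f = 0$ because $J$ is isometric. The principal obstacle here is the first step, where one must actually invoke the nontrivial Agler--M\raise.45ex\hbox{c}Carthy construction on the bidisc side and verify that $\sigma$-symmetrization of $\nu_0$ preserves both bounded point evaluation and the density property for points $z \in V$ (the latter uses that $\sigma$ permutes the functionals $k_z^\nu$ unitarily within $H^2(\nu)$); once that is in hand, the pushforward to $\mathbb{G}$ and the density argument are essentially formal.
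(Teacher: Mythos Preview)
Your proof is correct and follows the same route as the paper: push forward the Agler--M\raise.45ex\hbox{c}Carthy measure via $\pi$, use that $q \mapsto q \circ \pi$ is an isometry of $H^2(\mu)$ into $H^2(\nu)$, and transfer bounded point evaluation and density from the bidisc side (the paper phrases density via surjectivity of the adjoint $U^*$, you via an orthogonality argument---these are equivalent). Your $\sigma$-symmetrization of $V$ and of the measure is superfluous: your own argument never actually uses the range identification it yields, and the paper dispenses with symmetry altogether.
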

\begin{proof}
Agler and M$^{\textup{c}}$Carthy proved the analogous result for
distinguished varieties
in the bidisc (see \cite[Lemma 1.2]{AM05});
we just push forward their result to the symmetrized bidisc.

Let $W$ be a distinguished variety in the symmetrized bidisc. Let
$V$ be a distinguished variety in the bidisc given by Lemma
\ref{lem:DVA}, such that $\pi(V) = W$. By \cite[Lemma 1.2]{AM05}
there is a (finite regular Borel) measure $\nu$ on $\partial V$
such that every point $v \in V$ is a bounded point evaluation for
$H^2(\nu)$, and such that the span of these functionals is dense
in $H^2(\nu)$. Let $\mu$ be the push forward $\mu = \pi_* \nu$,
defined by
\[
\mu(E) = \nu(\pi^{-1}(E)) \quad, \quad E \textrm{ a Borel subset of } \partial W.
\]
Define $U: H^2(\mu) \rightarrow H^2(\nu)$ by first declaring $U f
= f \circ \pi$ for every polynomial $f$. By definition of $\mu =
\pi_* \nu$, $U$ preserves the norm, hence extends to a an isometry
on all of $H^2(\mu)$. If $w \in W$ and $v \in V$ satisfy $\pi(v) =
w$, then for every polynomial $f$
\[
f(w) = f(\pi(v)) = Uf(v),
\]
hence $ev_w = ev_v \circ U$. This shows that every $w \in W$ gives
rise to a bounded point evaluation on $H^2(\mu)$.

Next, we compute that for all $v \in V$ and all $f \in H^2(\mu)$
\[
\langle U^* k_v, f \rangle = \langle k_v, f \circ \pi \rangle = f(\pi(v)),
\]
hence $U^* k_v = k_{\pi(v)}$. But $U$ is an isometry, thus $U^*$
is surjective. Since point evaluations are dense in $H^2(\nu)$, it
follows that point evaluations are dense in $H^2(\mu)$ too.
\end{proof}

By the previous lemma, $H^2(\mu)$ is a reproducing kernel Hilbert space on $W$. The following lemma gives additional information.

\begin{lem}\label{lemeval}
Let $W$ be a distinguished variety in  $\mathbb G$, and let $\mu$
be the measure on $\partial W$ given as in Lemma
\textup{\ref{basiclem2}}. A point $(s_0,p_0) \in \mathbb G$ is in $W$ if and
only if $(\bar s_0, \bar p_0)$ is a joint eigenvalue for
$M_s^*$ and $M_p^*$. 
\end{lem}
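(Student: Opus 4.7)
The approach will be to exploit the reproducing kernel structure on $H^2(\mu)$ provided by Lemma \ref{basiclem2}. For the forward direction, if $(s_0, p_0) \in W$, I will use the reproducing kernel $k_{(s_0, p_0)}$ given by that lemma. A direct computation,
$$\langle f, M_s^* k_{(s_0, p_0)}\rangle = \langle sf, k_{(s_0, p_0)}\rangle = s_0 f(s_0, p_0) = \langle f, \bar s_0 k_{(s_0, p_0)}\rangle,$$
valid for every polynomial $f$ and hence by density for all $f \in H^2(\mu)$, shows $M_s^* k_{(s_0, p_0)} = \bar s_0 k_{(s_0, p_0)}$. The identical argument for $M_p^*$ then exhibits $k_{(s_0, p_0)}$ as a nonzero common eigenvector with eigenvalues $\bar s_0, \bar p_0$.

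For the converse, suppose $v \in H^2(\mu)$ is nonzero and satisfies $M_s^* v = \bar s_0 v$, $M_p^* v = \bar p_0 v$. The plan is to write any polynomial $f(s,p)$ as $f(M_s, M_p)\cdot 1$; combined with the identity $f(M_s, M_p)^* v = \overline{f(s_0, p_0)}\, v$, this yields
$$\langle f, v\rangle = \langle f(M_s, M_p)\cdot 1, v\rangle = f(s_0, p_0)\,\langle 1, v\rangle.$$
Since polynomials are dense in $H^2(\mu)$ by construction, $\langle 1, v\rangle = 0$ would force $v = 0$; hence $\langle 1, v\rangle \neq 0$, and so $f \mapsto f(s_0, p_0) = \langle f, v/\langle 1, v\rangle\rangle$ defines a bounded functional on $H^2(\mu)$, i.e.\ $(s_0, p_0)$ is a bounded point evaluation. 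To upgrade this to membership in $W$, I will observe that any polynomial $q$ vanishing on $\overline W$ vanishes on $\mathrm{supp}(\mu) \subseteq \partial W$ and therefore represents the zero element of $H^2(\mu)$; consequently $q(s_0, p_0) = 0$. Since $W$ is a one-dimensional algebraic variety cut out in $\mathbb G$ by such polynomials $q$ and $(s_0, p_0) \in \mathbb G$ by hypothesis, this forces $(s_0, p_0) \in W$.

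The main obstacle I expect is the final step of passing from a bounded point evaluation at $(s_0, p_0)$ to the conclusion $(s_0, p_0) \in W$. One must ensure that polynomials vanishing on $\mathrm{supp}(\mu)$ are exactly those vanishing on the algebraic set whose intersection with $\mathbb G$ is $W$. This reduces to the fact that $\mathrm{supp}(\mu)$ is Zariski dense in $\overline W$, which follows because $\partial W$ is uncountable inside the one-dimensional complex variety $\overline W$, so any polynomial vanishing on it must vanish on every irreducible component.
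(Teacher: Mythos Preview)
Your proof is correct, and the forward direction is identical to the paper's. For the converse the two arguments diverge slightly. The paper observes, for a unit eigenvector $v$, that $f(s_0,p_0) = \langle v, M_f^* v\rangle$ for every polynomial $f$, whence $|f(s_0,p_0)| \leq \|M_f\| = \sup_{(s,p)\in W}|f(s,p)|$; it then concludes that $(s_0,p_0)$ lies in the polynomially convex hull of $W$ relative to $\mathbb G$, which is $W$ itself. You instead show that $v$ (suitably normalized) is a reproducing kernel at $(s_0,p_0)$, and then feed in the defining polynomial(s) $Q$ of $W$: since $Q$ vanishes on $\overline W \supseteq \operatorname{supp}(\mu)$, it is the zero element of $H^2(\mu)$, so $Q(s_0,p_0)=0$. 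Your route is a touch more elementary in that it avoids mentioning polynomial convexity, at the cost of the extra step verifying $\langle 1,v\rangle\neq 0$; the paper's route is shorter but tacitly uses that an algebraic subvariety of $\mathbb G$ is relatively polynomially convex (which, of course, follows from exactly the same observation about the defining polynomial).

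One remark: your final paragraph about Zariski density of $\operatorname{supp}(\mu)$ in $\overline W$ is unnecessary. You only need the trivial direction --- that a polynomial vanishing on $\overline W$ vanishes on $\operatorname{supp}(\mu)\subseteq\partial W\subseteq\overline W$ --- and not the converse. The defining polynomial(s) of $W$ vanish on $\overline W$ by continuity, so that is all that is required.
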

\begin{proof}
It is a well known fact in the theory of reproducing kernel Hilbert spaces that $M_f^* k_x = \overline{f(x)} k_x$ for every multiplier $f$ and every kernel function $k_x$; in particular every point $(s_0,p_0) \in W$ is a joint eigenvalue for $M_s^*$ and $M_p^*$. 

Conversely, if $(\bar s_0, \bar p_0)$ is a joint eigenvalue and $v$ is a unit eigenvector, then $f(s_0,p_0) = \langle  v, M_f^* v\rangle$ for every polynomial $f$. Therefore $|f(s_0,p_0)| \leq \|M_f\| = \sup_{(s,p) \in W} |f(s,p)|$. So $(s_0,p_0)$ is in the polynomial convex hull of $W$ (relative to $\mathbb G$), which is $W$. 
\end{proof}

\begin{lem}\label{lempure}
Let $W$ be a distinguished variety in  $\mathbb G$, and let $\mu$
be the measure on $\partial W$ given as in Lemma
\textup{\ref{basiclem2}}. The pair $(M_s,M_p)$ on $H^2(\mu)$,
defined as multiplication by the co-ordinate functions, is a pure
$\Gamma$-isometry.
\end{lem}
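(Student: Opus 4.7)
My plan is to verify the two characterizing conditions directly using the concrete structure of $H^2(\mu)$. By part (3) of Theorem \ref{Gamma-isometry}, to conclude that $(M_s,M_p)$ is a $\Gamma$-isometry it suffices to show that it is a $\Gamma$-contraction with $M_p$ an isometry. Since $\partial W \subseteq b\Gamma$, every point $(s,p) \in \partial W$ satisfies $|p|=1$, so for any $f \in H^2(\mu)$,
\[
\|M_p f\|^2 = \int_{\partial W} |p|^2 |f|^2 \, d\mu = \|f\|^2,
\]
hence $M_p$ is an isometry. Likewise, for any polynomial $q$ in two variables,
\[
\|q(M_s,M_p) f\|^2 = \int_{\partial W} |q(s,p)|^2 |f|^2 \, d\mu \leq \sup_{(s,p)\in\partial W} |q(s,p)|^2 \, \|f\|^2 \leq \sup_{\Gamma}|q|^2 \, \|f\|^2,
\]
so $(M_s,M_p)$ is a $\Gamma$-contraction.

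It then remains to show purity, i.e.\ that $M_p^{*n} \to 0$ strongly on $H^2(\mu)$. The key tool is Lemma \ref{basiclem2}: the kernel functions $\{k_w : w \in W\}$ have dense span in $H^2(\mu)$. A standard reproducing-kernel identity gives $M_p^* k_w = \overline{p}\, k_w$ for $w = (s,p) \in W$, and since $W \subseteq \mathbb G$ the second coordinate of any point in $W$ is of the form $z_1 z_2$ with $|z_1|,|z_2|<1$, so $|p|<1$. Thus
\[
\|M_p^{*n} k_w\| = |p|^n \|k_w\| \longrightarrow 0 \quad \textup{as } n\to\infty
\]
for each $w \in W$, and hence on finite linear combinations of kernel functions.

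Finally, since $M_p$ is an isometry we have $\|M_p^{*n}\| \leq 1$ for all $n$, so the convergence on a dense subset together with the uniform norm bound yields $M_p^{*n} f \to 0$ strongly for every $f \in H^2(\mu)$. Thus $M_p$ is a pure isometry, and $(M_s,M_p)$ is a pure $\Gamma$-isometry. The only non-routine step is the first-paragraph verification that $M_p$ is an isometry and that $\partial W \subseteq b\Gamma$ controls the norms — these follow immediately from the definition of a distinguished variety in $\mathbb G$, so in fact there is no serious obstacle; the proof is essentially an assembly of the preceding lemmas.
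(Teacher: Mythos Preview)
Your proof is correct and follows essentially the same approach as the paper: both establish the $\Gamma$-isometry property from the fact that $\partial W \subseteq b\Gamma$, and both prove purity via the eigenvector identity $M_p^* k_w = \overline{p}\, k_w$ together with the density of kernel functions from Lemma~\ref{basiclem2}. The only cosmetic difference is that the paper argues $(M_s,M_p)$ is a $\Gamma$-isometry by exhibiting it as the restriction of the $\Gamma$-unitary $(\tilde M_s,\tilde M_p)$ on $L^2(\partial W,\mu)$ to the invariant subspace $H^2(\mu)$, whereas you verify condition~(3) of Theorem~\ref{Gamma-isometry} directly by an integral computation; these are equivalent one-line checks.
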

\begin{proof}
Let us consider the pair of operators
$(\tilde{M}_s,\tilde{M}_p)$, multiplication by co-ordinate functions,
on $L^2(\partial W, \mu)$. They are commuting normal operators and
the joint spectrum $\sigma(\tilde{M}_s,\tilde{M}_p)$ is contained in 
$\partial W \subseteq b\Gamma$. Therefore, $(\tilde{M}_s,\tilde{M}_p)$ is a
$\Gamma$-unitary and $(M_s,M_p)$, being the restriction of
$(\tilde{M}_s,\tilde{M}_p)$ to the common invariant subspace
$H^2(\mu)$, is a $\Gamma$-isometry. By a standard computation, for every
$(s_0, p_0) \in W$, the kernel function
$k_{(s_0, p_0)}$ is an eigenfunction of $M_p^*$ corresponding to the
eigenvalue $\overline{p_0}$. Therefore, $$ (M_p^*)^nk_{(s_0, p_0)}=\overline{
p_0}^nk_{\lambda} \rightarrow 0 \; \textup{ as } n \rightarrow
\infty,
$$
because $|p_0|<1$.
Since the evaluation
functionals $k_{\lambda}$ are dense in $H^2(\mu)$, this shows that $M_p$ is pure. Hence $M_p$ is a pure
isometry and consequently $(M_s,M_p)$ is a pure $\Gamma$-isometry
on $H^2(\mu)$.
\end{proof}

The following theorem gives a determinantal representation of
distinguished varieties in the symmetrized bidisc in terms of
the natural coordinates in $\mathbb G$.

\begin{thm}\label{thm:DVchar}
Let $A$ be a square matrix A with $\omega(A)< 1$, and let $W$ be
the subset of $\mathbb G$ defined by
\begin{align}\label{eq:W}
W = \{(s,p) \in \mathbb G \,:\, \det(A + pA^* - sI) = 0\}.
\end{align}
Then $W$ is a distinguished variety. Conversely, every
distinguished variety in $\mathbb G$ has the form $\{(s,p) \in
\mathbb G \,:\, \det(A + pA^* - sI) = 0\}$, for some matrix $A$
with $\omega(A)\leq 1$.
\end{thm}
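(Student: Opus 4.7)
The plan is to establish the two implications separately, exploiting the model theory for pure $\Gamma$-isometries developed above. For sufficiency, given a matrix $A$ with $\omega(A)<1$, I would first note that $W$ is a 1-dimensional algebraic set, cut out in $\mathbb C^2$ by the single nontrivial polynomial $\det(A+pA^*-sI)$. The substantive content is showing $W\subseteq\mathbb G$ and $\overline{W}\cap\partial\mathbb G\subseteq b\Gamma$, and the key computation is eigenvector-by-eigenvector: if $(A+pA^*)x=sx$ for a unit vector $x$ and $\alpha:=\langle Ax,x\rangle$, then $|\alpha|\leq\omega(A)<1$ and $s=\alpha+\bar\alpha p$, so $s-\bar sp=\alpha(1-|p|^2)$. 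For $|p|<1$ this gives $|s-\bar sp|<1-|p|^2$, placing $(s,p)\in\mathbb G$ via the Agler--Young characterization of $\Gamma$; for $|p|=1$, the matrix $\bar p^{1/2}(A+pA^*)$ is self-adjoint (since $p^{1/2}A^*$ is the adjoint of $\bar p^{1/2}A$), forcing its eigenvalues to be real, which gives $s=\bar sp$. Together with the norm bound $|s|\leq\|A+pA^*\|\leq 2\omega(A)<2$, this places $(s,p)\in b\Gamma$.

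For necessity, given a distinguished variety $W\subseteq\mathbb G$, I would manufacture $A$ via the model theory of pure $\Gamma$-isometries. Let $\mu$ be the measure on $\partial W$ provided by Lemma \ref{basiclem2}. By Lemma \ref{lempure}, the pair $(M_s,M_p)$ on $H^2(\mu)$ is a pure $\Gamma$-isometry. Granting temporarily that $\mathcal D_{M_p^*}$ is finite-dimensional, Theorem \ref{model1} unitarily identifies $(M_s,M_p)$ with $(T_{F_*^*+F_*z},T_z)$ on $H^2(\mathcal D_{M_p^*})$, where $F_*$ is a matrix with $\omega(F_*)\leq 1$. I would then set $A:=F_*^*$ and translate the criterion of Lemma \ref{lemeval}: joint eigenvectors of $(T_{F_*^*+F_*z}^*,T_z^*)$ are of the form $k_p\otimes e$ with $e$ an eigenvector of $F_*+\bar pF_*^*$, corresponding to the joint eigenvalue $(\bar s,\bar p)$ precisely when $s$ is an eigenvalue of $F_*^*+pF_*=A+pA^*$. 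Hence $W=\{(s,p)\in\mathbb G:\det(A+pA^*-sI)=0\}$.

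The main obstacle will be justifying that $\dim\mathcal D_{M_p^*}<\infty$, so that $A$ is genuinely a matrix and not merely a bounded operator on an infinite-dimensional space. To handle this I would lift via Lemma \ref{lem:DVA} to a distinguished variety $V\subseteq\mathbb D^2$ with $\pi(V)=W$, invoke the Agler--McCarthy representation of $V$ (Theorem \ref{thm:AMDVchar}) by a rational matrix inner function $\Psi$ of some finite size $n$, and use that the generic fibers of the projection $(z,w)\mapsto w$ on $V$ have cardinality at most $n$. Symmetrization keeps the fibers of $(s,p)\mapsto p$ on $W$ finite, and this fiber-size bound translates to finite multiplicity of $M_p$ on $H^2(\mu)$, i.e., $\dim\mathcal D_{M_p^*}<\infty$. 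Once this finiteness is in hand, the remainder of the argument follows cleanly from the model theory.
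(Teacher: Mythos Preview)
Your strategy matches the paper's: an eigenvector/inner-product computation for sufficiency, and the pure $\Gamma$-isometry model on $H^2(\mu)$ (Lemmas \ref{basiclem2}--\ref{lempure}, Theorem \ref{model1}, Lemma \ref{lemeval}) for necessity. For sufficiency the paper rewrites your relation $s=\alpha+\bar\alpha p$ in the $(z_1,z_2)$ coordinates as $-z_2=(z_1-\alpha)/(1-\bar\alpha z_1)$ and observes that this is a disc automorphism when $|\alpha|<1$, so $|z_1|<1\Leftrightarrow|z_2|<1$; your $(s,p)$-side argument via $|s-\bar s p|=|\alpha|(1-|p|^2)<1-|p|^2$ is an equivalent way to finish. (Note that ``$W\subseteq\mathbb G$'' is trivial by definition; what you are really proving is that any $(s,p)\in\Gamma$ with $|p|<1$ satisfying the determinantal equation already lies in $\mathbb G$.)

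The one substantive difference is your route to $\dim\mathcal D_{M_p^*}<\infty$. You lift to the bidisc, invoke Theorem \ref{thm:AMDVchar}, and then assert that a bound on fiber cardinality ``translates to finite multiplicity of $M_p$ on $H^2(\mu)$''; but that last translation is the entire content, and you have not supplied it. The paper bypasses the bidisc: since $W$ is distinguished, its defining polynomial $Q$ is not divisible by $p$, so one may write $Q(s,p)=a_0+a_1s+\cdots+a_ks^k+p\,R(s,p)$ with $a_k\neq 0$. The relation $Q(M_s,M_p)=0$ on $H^2(\mu)$ then yields $s^k\in\textup{Ran}\,M_p+\textup{span}\{1,s,\ldots,s^{k-1}\}$, and by induction $H^2(\mu)=\textup{Ran}\,M_p+\textup{span}\{1,s,\ldots,s^{k-1}\}$, whence $\dim\mathcal D_{M_p^*}\leq k$. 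This is both shorter and is exactly the missing link in your argument---indeed, any honest proof that ``finite fibers $\Rightarrow$ finite defect'' in this setting must pass through such a polynomial relation, so the detour through Agler--M\raise.45ex\hbox{c}Carthy buys nothing.
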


\begin{proof}
Suppose that
\begin{align}
W = \{(s,p) \in \mathbb G \,:\, \det(A + pA^* - sI) = 0\},
\end{align}
where $A$ is an $n \times n$ matrix with $\omega(A) < 1$, and
suppose that $(s,p) \in W$. In order to prove that $W$ is a
distinguished variety, it suffices to show that it exits the
symmetrized bidisc through its distinguished boundary $b\Gamma$,
i.e. if $(s,p)\in \overline{W}\cap \partial \mathbb G=\partial W$
then $(s,p)\in \overline{W}\cap b\Gamma$. This is same as saying
that if $(s,p)=(z_1+z_2,z_1z_2)\in\overline W$ and $|z_1|<1$, then
$|z_2|<1$ as well.

Assume, therefore, that $|z_1|<1$. Since $(z_1 + z_2,z_1 z_2) \in
\overline W$, we have that $\det(A + A^*z_1z_2 - (z_1+z_2)I) = 0$.
It follows that there exists a unit vector $v \in \mathbb{C}^n$
such that
$$
Av + z_1 z_2 A^* v - (z_1 + z_2) v = 0.
$$
Taking the inner product of this equation with $v$, and putting
$\alpha = \langle Av, v \rangle$, we obtain
\begin{align}\label{eq:alpha}
\alpha + z_1 z_2 \bar{\alpha} - z_1 - z_2 = 0. \end{align} Since
$\omega(A) < 1$ it follows that $|\alpha|< 1$. Rearrange
(\ref{eq:alpha}) as
\begin{align}\label{eq:alpha1}
\frac{z_1-\alpha}{1- \bar{\alpha} z_1} = - z_2 .
\end{align}
As $|\alpha|<1$, the left hand side is an automorphism of the
disc. Therefore, if $|z_1|<1$, then $|z_2|<1$. The proof of the
first part of the theorem is complete.

Next we assume that $W$ is a distinguished variety in $\mathbb G$.
Let $(M_s,M_p)$ be the pair of operators on $H^2(\mu)$ given by
multiplication by the co-ordinate functions, where $\mu$ is as in
Lemma \ref{basiclem2}. Then, by Lemma
\ref{lempure}, $(M_s,M_p)$ is a pure $\Gamma$-isometry on
$H^2(\mu)$.
Now $M_pM_p^*$ is projection onto $\textup{Ran} M_p$, and clearly
$$\textup{Ran} M_p \supseteq \{pf(s,p)\,\,:\,\; f(s,p) \textup{ is polynomial in } s,p\}. $$
Let $W = \{(s,p) \in \mathbb G : Q(s,p) = 0\}$, where $Q$ is an appropriate polynomial.
Since $W$ is distinguished, $Q$ is not divisible by $p$. Write
$$Q(s,p) = a_0 + a_1 s + \dots + a_k s^k + pR(s,p) ,$$
where $a_k \neq 0$ and $R$ is some polynomial. Then
$$ s^k \in \textup{Ran} M_p + {\textrm{span}} \{1,s,s^2,\cdots,s^{k-1} \}, $$
and by induction we have that
$$ H^2(\mu)=\textup{Ran} M_p + {\textrm{span}} \{1,s,s^2,\cdots,s^{k-1} \},$$
where if $k = 0$ then we interpret the second summand above as the zero subspace.
Therefore, $\textup{Ran}(I-M_pM_p^*)$ has finite dimension, say
$n$. So by Theorem \ref{model1}, $(M_s,M_p)$ can be identified
with the pair of Toeplitz operators $(T_{\varphi},T_z)$ on the space
$H^2(\mathcal D_{M_p^*})$, where $\mathcal D_{M_{p}^*}$ has
dimension $n$. Here $\varphi(z)=B^*+zB$, where $B$ is the
fundamental operator of $(M_s^*,M_p^*)$.

By Lemma \ref{lemeval}, a point $(s_0,p_0) \in \mathbb G$ is in $W$ if and
only if $(\bar s_0, \bar p_0)$ is a joint eigenvalue for
$M_s^*$ and $M_p^*$. 
In terms of the unitarily equivalent model for pure
$\Gamma$-isometries (Theorem \ref{model1}), this is equivalent to $(\bar s_0, \bar p_0)$ being a joint
eigenvalue of $(T_{\varphi}^*, T_z^*)$, which happens if and only if $\bar s_0$ is
and eigenvalue for $\varphi(p_0)^*$.
This leads to
$$W= \{(s,p) \in \mathbb G \,:\, \det(F_*^* + pF_* - sI) =
0\},$$ where $F_*$, being the fundamental operator of a
$\Gamma$-contraction, has numerical radius not bigger than $1$.
This gives (\ref{eq:W}) with $A = F_*^*$.
\end{proof}

Theorem \ref{thm:DVchar} leaves open the question whether every
variety given by the determinantal representation (\ref{eq:W}),
where the matrix  $A$ satisfies $\omega(A)=1$, is a distinguished variety in the symmetrized bidisc. The following
examples show that the answer to this question is sometimes yes
and sometimes no.

\begin{eg}
Let
\[
A = \begin{pmatrix}
0 & 2 & 0 \\
0 & 0 & 0 \\
0 & 0 & 0
\end{pmatrix} .
\]
Then $\omega(A) = 1$. Define
\[
W = \{(s,p) \in \mathbb G \,:\, \det(A + A^*p - sI) = 0\} .
\]
Computing the determinant, we find that
\[
W = \{(s,p) \in \mathbb G \,:\, s(s^2-4p) = 0\}.
\]
Evidently, this is a distinguished variety.
\end{eg}

\begin{eg}
Let
\[
A = \begin{pmatrix}
0 & 2 & 0 \\
0 & 0 & 0 \\
0 & 0 & 1
\end{pmatrix} .
\]
Then $\omega(A) = 1$. Define
\[
W = \{(s,p) \in \mathbb G \,:\, \det(A + A^*p - sI) = 0\} .
\]
Computing the determinant, we find that
\[
W = \{(s,p) \in \mathbb G \,:\, (1+p-s)(s^2-4p) = 0\}.
\]
This is not a distinguished variety; for example $\overline{W}$
contains the point $(1,0) = \pi(1,0)$, which lies on $\partial
\mathbb G \setminus b \Gamma$.
\end{eg}

In fact, if $A$ has an eigenvalue of modulus $1$ then $W$
defined as in (\ref{eq:W}) is not distinguished. Indeed, suppose
that $A$ has an eigenvalue $\alpha$ of unit modulus, and let $v$
be a corresponding eigenvector. Let $W$ be as above. Then $Av -
\alpha I v=0$, which means that $(\alpha, 0) =  \pi(\alpha,0)$ is
in $\partial \mathbb G \cap \overline{W} \setminus b \Gamma$. We
do not know if for $A$ satisfying $\omega(A) = 1$, having an
eigenvalue of unit modulus is the only obstruction to being
distinguished.

The varieties of the form (\ref{eq:W}) with $\omega(A)<1$ do have
a complete characterization. Denote
\[
b D_{\Gamma} = \{\pi(z,z)=(2z,z^2) \,:\, z \in \mathbb T\}.
\]

\begin{thm}
Let $W$ be a variety in $\mathbb G$. Then
\[
W = \{(s,p) \in \mathbb G \,:\, \det(A + A^*p - sI) = 0\}
\]
for a matrix $A$ with $\omega(A)<1$ if and only if $W$ is a
distinguished variety such that $\partial W \cap bD_{\Gamma} =
\emptyset$.
\end{thm}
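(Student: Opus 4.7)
My plan is to apply Theorem~\ref{thm:DVchar} to conclude that $W$ is a distinguished variety, and then rule out intersections of $\partial W$ with $bD_{\Gamma}$ by a direct contradiction. Specifically, if $(2z_0, z_0^2) \in \partial W$ for some $z_0 \in \mathbb T$, then $\det(A + z_0^2 A^* - 2z_0 I) = 0$ supplies a unit vector $v$ with $Av + z_0^2 A^* v = 2z_0 v$. Pairing with $v$ and setting $\alpha := \langle Av, v\rangle$ gives $\alpha + z_0^2 \bar\alpha = 2z_0$, hence $2\operatorname{Re}(\bar z_0 \alpha) = 2$, which forces $\bar z_0 \alpha = 1$ and contradicts $|\alpha| \leq \omega(A) < 1$.

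\textbf{Converse --- setup.} Theorem~\ref{thm:DVchar} yields some $A$ with $\omega(A) \leq 1$ representing $W$; my plan is to induct on the size of $A$ to replace it by one with $\omega(A) < 1$. Assume $\omega(A) = 1$ and choose $z_0 \in \mathbb T$ with $\|H_{z_0}\| = 2$, where $H_z := \bar z A + z A^*$; after possibly replacing $z_0$ by $-z_0$, assume $+2$ is an eigenvalue with eigenspace $V_{z_0}$. The same pairing argument from the forward direction would force $\langle Av, v\rangle = z_0$ for every unit $v \in V_{z_0}$, so in the decomposition $\mathbb{C}^n = V_{z_0} \oplus V_{z_0}^\perp$ the $(1,1)$-block $A_{11}$ equals $z_0 I$. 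The vanishing of the off-diagonal blocks of the self-adjoint $H_{z_0}$ also gives $A_{12} = -z_0^2 A_{21}^*$.

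\textbf{Case A.} If $A_{21} : V_{z_0} \to V_{z_0}^\perp$ has nontrivial kernel, I would pick a unit $v \in \ker A_{21}$: such $v$ is a joint eigenvector of $A$ and $A^*$ with unit-modulus eigenvalue $z_0$ (back-substituting $Av = z_0 v$ into $H_{z_0} v = 2v$ yields $A^* v = \bar z_0 v$), and splitting off $\mathbb{C} v$ factors
\[
\det(A + pA^* - sI) = (z_0 + \bar z_0 p - s)\, \det(A' + p (A')^* - sI).
\]
The line $\{s = z_0 + \bar z_0 p\}$ does not meet $\mathbb G$ (on it $z^2 - sz + p = (z - z_0)(z - \bar z_0 p)$ has the unimodular root $z_0$), so $W$ is already represented by the strictly smaller $A'$ and the inductive hypothesis applies.

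\textbf{Case B and main obstacle.} If $A_{21}$ is injective on $V_{z_0}$, my plan is to show $(2z_0, z_0^2) \in \overline W$, contradicting $\partial W \cap bD_{\Gamma} = \emptyset$. Using the Schur complement of $A + pA^* - sI$ against its lower-right block (invertible near the base point since $H_{z_0}|_{V_{z_0}^\perp}$ has $2$ outside its spectrum), I would derive the Puiseux expansion
\[
s - 2z_0 = (p - z_0^2)\bar z_0 \,-\, \lambda (p - z_0^2)^2 \,+\, O\big((p - z_0^2)^3\big)
\]
for the branches of the algebraic variety through $(2z_0, z_0^2)$, with $\lambda$ running over the eigenvalues of $N := \bar z_0^{\,3} A_{21}^*\, (2I - H_{z_0}|_{V_{z_0}^\perp})^{-1}\, A_{21}$. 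Positivity of $2I - H_{z_0}|_{V_{z_0}^\perp}$ together with injectivity of $A_{21}$ would force $N$ to be a positive definite operator times $\bar z_0^{\,3}$, so every eigenvalue of $N$ would have the form $c\bar z_0^{\,3}$ with $c > 0$. Setting $p = z_0^2(1-t)$ with $t \downarrow 0$, a short direct computation shows that both roots of $z^2 - sz + p = 0$ lie in $\mathbb D$, so the branch enters $\mathbb G$ and $(2z_0, z_0^2) \in \overline W$, giving the contradiction. The hardest step will be this final calculation --- checking that the branch actually enters $\mathbb G$ rather than merely grazing $\partial \mathbb G$ --- which hinges precisely on the sign $c > 0$ supplied by injectivity of $A_{21}$.
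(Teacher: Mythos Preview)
Your forward direction matches the paper's in spirit: both reduce to observing that an eigenvalue of the self-adjoint matrix $e^{-i\theta/2}A + e^{i\theta/2}A^*$ has modulus at most $\|A + e^{i\theta}A^*\| < 2$ when $\omega(A) < 1$. The paper phrases this via a norm bound on the normal matrix, you via the numerical-range contradiction; these are equivalent.

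For the converse, your approach is correct but takes a markedly different and considerably longer route than the paper. The paper's argument is operator-theoretic and very short: rather than starting from an arbitrary representing matrix $A$ with $\omega(A)\leq 1$ and trying to reduce it, it uses the \emph{specific} $A$ furnished by the construction in the proof of Theorem~\ref{thm:DVchar}, namely $A = F_*^*$ where $F_*$ is the fundamental operator of $(M_s^*,M_p^*)$ on $H^2(\mu)$. The unitary equivalence $M_s \cong T_{A+A^*z}$ then gives the chain
\[
\sup_{(s,p)\in\partial W}|s| \;=\; \|M_s\| \;=\; \|T_{A+A^*z}\| \;=\; \max_{\theta}\|e^{i\theta}A + e^{-i\theta}A^*\|,
\]
and the hypothesis $\partial W\cap bD_\Gamma=\emptyset$ says exactly that the left side is $<2$, whence $\omega(A)<1$ in two lines. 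Your inductive reduction instead works with an arbitrary representing $A$, peels off reducing unimodular eigenvectors in Case~A, and in Case~B manufactures a point of $\partial W\cap bD_\Gamma$ by a second-order Puiseux analysis of the determinantal curve at $(2z_0,z_0^2)$. The calculation you sketch is sound --- the positivity $c>0$ coming from injectivity of $A_{21}$ and positive-definiteness of $(2I - H_{z_0}|_{V_{z_0}^\perp})^{-1}$ is precisely what forces both roots of $z^2 - sz + p$ into $\mathbb D$ along the branch --- but carrying it out rigorously (handling coincident eigenvalues of $N$, possible fractional Puiseux exponents beyond the $u^2$ term, and the borderline discriminant case $4c=1$) requires care that the paper's argument sidesteps entirely. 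What your approach buys is a direct, almost purely algebraic argument that does not revisit the $H^2(\mu)$ model beyond the black-box invocation of Theorem~\ref{thm:DVchar}; what the paper's approach buys is brevity and the conceptual point that the operator-theoretically natural $A$ automatically has the right numerical radius.
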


\begin{proof}
By Theorem \ref{thm:DVchar}, if $W$ has such a determinantal
representation then $W$ is a distinguished variety. We need to
show that $\partial W \cap bD_{\Gamma} = \emptyset$. For this,
note that
\[
b\Gamma = \{(s,p) \in \Gamma \,:\, |p|=1\}.
\]
and
\[
bD_{\Gamma} = \{(s,p) \in \Gamma \,:\, |p|=1, |s| = 2\}.
\]
If $(s, e^{i\theta}) \in \partial W$, then $\det (A +
A^*e^{i\theta} - sI) = 0$. In other words, $e^{-i\theta/2}s$ is an
eigenvalue of the normal matrix $N = e^{-i\theta/2} A +
e^{i\theta/2}A^*$. But $\|N\| = \omega(N) <2$, thus $|s|<2$, so
$(s,e^{i\theta}) \notin bD_\Gamma$.

Conversely, suppose that $W$ is a distinguished variety such that
$\partial W \cap bD_{\Gamma} = \emptyset$. As in the proof of
Theorem \ref{thm:DVchar}, construct the measure $\mu$ on $\partial
W$ and the space $H^2(\mu)$. Let $(M_s,M_p)$ be the multiplication
operators on $H^2(\mu)$. In the course of the proof of Theorem
\ref{thm:DVchar} we showed that $W$ is given by (\ref{eq:W}) with
$A$ the adjoint of the fundamental operator of $(M_s^*,M_p^*)$.
What we need to show is that $\omega(A)<1$.

In the proof of Theorem \ref{thm:DVchar} we also noted that
$M_s^*$ is unitarily equivalent to $T^*_\varphi$, where $\varphi(z) = A +
A^*z$. Now $\|M_s^*\| = \|M_s\| = \displaystyle \sup_{(s,p) \in
\partial W}|s| <2$ by assumption. On the other hand, $\|M_s^*\| =
\|T_\varphi\| = \displaystyle \max_{z\in \mathbb T}\|\varphi(z)\|$, thus
\begin{equation}\label{eq:omega_small}
2>\|M_s^*\|=\max_{\theta \in \mathbb R}\|e^{i\theta}A +
e^{-i\theta}A^*\| = \omega(e^{i\theta}A + e^{-i\theta}A^*).
\end{equation}
This implies that $\omega(A) < 1$. Indeed, if $\omega(A) = 1$ then
there is some $\theta \in \mathbb R$ and a unit vector $u$ such
that $\langle Au,u \rangle = e^{-i\theta}$. But then
\[
\langle (e^{i\theta}A + e^{-i\theta}A^*)u, u \rangle = 2,
\]
and this contradicts (\ref{eq:omega_small}). Hence the proof is
complete.
\end{proof}


\section{A von-Neumann type inequality for $\Gamma$-contractions}

\subsection{Strict $\Gamma$-contractions and their fundamental operators}

Recall from the introduction that we denote
\[
\rho(S,P)=2(I-P^*P)-(S-S^*P)-(S^*-P^*S).
\]

\begin{defn}
A pair of commuting operators $(S,P)$ is said to be a {\em strict 
$\Gamma$-contraction} if there is a constant $c>0$ such that $
\rho(\alpha S,\alpha^2 P) \geq cI $ for all $\alpha \in
\overline{\mathbb{D}}$.
\end{defn}

\begin{lem}\label{lem:strict}
If $(S,P)$ is a strict $\Gamma$-contraction then $P$ is a strict
contraction. In particular, $P$ and $P^*$ are pure contractions.
\end{lem}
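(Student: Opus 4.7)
The plan is to exploit the defining inequality $\rho(\alpha S,\alpha^{2}P)\ge cI$ at just a pair of well-chosen values of $\alpha$, in order to eliminate the $S$-dependent terms and extract a uniform lower bound on $I-P^{*}P$. Concretely, for any $\alpha\in\mathbb T$ I would expand
\[
\rho(\alpha S,\alpha^{2}P)=2(I-|\alpha|^{4}P^{*}P)-\alpha(S-S^{*}P)-\bar\alpha(S^{*}-P^{*}S),
\]
which shows that the $S$-parts depend linearly on $\alpha$ and $\bar\alpha$ while the $P$-part is independent of the argument of $\alpha$.

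Next I would take $\alpha=1$ and $\alpha=-1$ in the hypothesis and add the two resulting inequalities. The linear terms in $\alpha$ cancel, and what remains is
\[
4(I-P^{*}P)=\rho(S,P)+\rho(-S,P)\ge 2cI,
\]
so $I-P^{*}P\ge (c/2)I$. Equivalently $\|P\|^{2}\le 1-c/2<1$, which is exactly the statement that $P$ is a strict contraction.

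Finally, from $\|P\|<1$ I get $\|P^{n}\|\le\|P\|^{n}\to 0$ and $\|(P^{*})^{n}\|\le\|P\|^{n}\to 0$, so $P^{n}$ and $(P^{*})^{n}$ tend to $0$ in norm and, a fortiori, strongly. Thus $P$ and $P^{*}$ are both pure contractions.

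There is no real obstacle in this proof; the only thing to be careful about is to plug in $\alpha\in\mathbb T$ (rather than $\alpha=0$, which gives only the trivial bound $c\le 2$) so that the factor $|\alpha|^{4}$ in front of $P^{*}P$ becomes $1$. Averaging over the whole circle $\mathbb T$ would also work and would yield the same inequality, but using just the two points $\alpha=\pm 1$ makes the argument essentially computation-free.
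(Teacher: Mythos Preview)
Your proof is correct and is essentially identical to the paper's: both add $\rho(S,P)$ and $\rho(-S,P)$ to cancel the $S$-terms and obtain $4(I-P^{*}P)\ge 2cI$, hence $\|P\|<1$. The only difference is that you spell out the purity of $P$ and $P^{*}$ explicitly, whereas the paper leaves this as an immediate consequence.
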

\begin{proof}
Since $
\rho(\alpha S,\alpha^2 P) \geq cI $ for all $\alpha \in
\overline{\mathbb{D}}$, we have in particular
\[
4D_P^2 = \rho(-S,P) + \rho(S,P) \geq 2cI,
\]
thus $D_P^2 \geq (c/2) I$. This in turn implies $P^*P \leq
(1-c/2)I$, so $\|P\|<1$, as required.
\end{proof}
It is easy to see that $P$ is a strict contraction if and only if
$D_P$ is invertible. Thus, a strict $\Gamma$-contraction has
finite defect index if and only if it acts on a finite dimensional
space. Note also that if $D_P$ is invertible, then the fundamental
operator (recall equation (\ref{fundamental-eqn})) of $(S,P)$ is
given by
\[
F = D_P^{-1}(S-S^*P)D_P^{-1}.
\]

\begin{prop}\label{prop:nrstrict}
If $F$ is the fundamental operator of a strict
$\Gamma$-contraction, then $\omega(F)< 1$.
\end{prop}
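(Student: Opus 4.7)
The plan is to exploit the positivity of the pencil $\rho(\alpha S, \alpha^2 P)$ on the unit circle and translate it, via the fundamental equation, into a bound on the real part of $\alpha F$, which then gives control on the numerical radius of $F$.

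First, I would evaluate the pencil at $|\alpha|=1$. A direct computation gives
\[
\rho(\alpha S,\alpha^2 P)=2(I-P^*P)-\alpha(S-S^*P)-\bar\alpha(S^*-P^*S).
\]
Substituting the fundamental equation $S-S^*P=D_P F D_P$ and its adjoint, this becomes
\[
\rho(\alpha S,\alpha^2P)=D_P\bigl(2I-\alpha F-\bar\alpha F^*\bigr)D_P \qquad (|\alpha|=1).
\]

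Next, by Lemma \ref{lem:strict}, $P$ is a strict contraction, so $D_P$ is invertible. Hence for every $\alpha$ on the unit circle the hypothesis $\rho(\alpha S,\alpha^2P)\geq cI$ can be rewritten as
\[
2I-\alpha F-\bar\alpha F^*\geq c\,D_P^{-2}.
\]
Since $\|D_P\|\leq 1$ we have $D_P^{-2}\geq I$, giving
\[
\alpha F+\bar\alpha F^*\leq (2-c)I \qquad \text{for all } |\alpha|=1.
\]

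Finally, writing $\alpha=e^{-i\theta}$ and testing the inequality on an arbitrary unit vector $u\in\mathcal D_P$, we obtain
\[
2\,\mathrm{Re}\bigl(e^{-i\theta}\langle Fu,u\rangle\bigr)\leq 2-c
\]
for every $\theta\in\mathbb R$. Taking the supremum over $\theta$ yields $2|\langle Fu,u\rangle|\leq 2-c$, and hence $\omega(F)\leq 1-c/2<1$. The argument is essentially a straightforward bookkeeping computation; the only ``obstacle'' is observing that the strictness constant $c$ survives intact through the substitution because $D_P$ is bounded above by $1$, so no new factors spoil the final bound.
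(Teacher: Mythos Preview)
Your proof is correct and follows essentially the same approach as the paper: both evaluate the pencil $\rho(\alpha S,\alpha^2 P)$ on the unit circle, factor it through $D_P$ via the fundamental equation, conjugate by $D_P^{-1}$ (using Lemma~\ref{lem:strict}), and then invoke $D_P^{-2}\geq I$ to obtain $\mathrm{Re}(\alpha F)\leq (1-c/2)I$, yielding $\omega(F)\leq 1-c/2<1$. The only cosmetic difference is that the paper quotes the equivalence $\omega(X)\leq c \Leftrightarrow \mathrm{Re}(\alpha X)\leq cI$ for all $\alpha\in\mathbb T$ as a known fact, whereas you derive it directly by testing on unit vectors and optimizing over $\theta$.
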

\begin{proof}
It is well known (see, e.g., \cite[Lemma 2.9]{tirtha-sourav} for additional details) that for every operator $X$,
\[
\omega(X) \leq 1 \Longleftrightarrow \forall \alpha \in \mathbb{T} \,,\, \textup {Re} (\alpha X) \leq I.
\]
It follows immediately that for every operator $X$ and all $c>0$,
\begin{equation}\label{eq:nr}
\omega(X) \leq c \Longleftrightarrow \forall \alpha \in \mathbb{T} \,,\, \textup {Re} (\alpha X) \leq cI.
\end{equation}
Now assume that $(S,P)$ is a strict $\Gamma$-contraction, so
\[
\rho(\alpha S,\alpha^2 P) = 2 D_P^2 -2 \textup{Re}(\alpha(S-S^*P)) \geq cI
\]
for all $\alpha \in \overline{\mathbb T}$. Rearranging, we find that
\[
\textrm{Re}(\alpha(D_P^{-1}(S-S^*P)D_P^{-1}) \leq I - c/2 D_P^{-2} \leq (1-c/2)I,
\]
for all $\alpha \in \mathbb T$.
But the fundamental operator for $(S,P)$ is $F = D_P^{-1}(S-S^*P)D_P^{-1}$. It follows from (\ref{eq:nr}) that $\omega(F)\leq (1-c/2)<1$.
\end{proof}

\begin{rem}
The class of strict $\Gamma$-contractions is analogous to the
class of strict contractions in one variable operator theory. It
is actually a subclass of the $\Gamma$-contractions having
fundamental operators of numerical radius less than 1 (obvious
from the previous result). Therefore, keeping in mind the
importance of the class of strict $\Gamma$-contractions, we make
the statement of the Corollary \ref{cor:VN} precise for strict
$\Gamma$-contractions although it is valid for the larger class of
$\Gamma$-contractions that have fundamental operators of numerical
radius less than 1.
\end{rem}

\subsection{A von-Neumann type inequality}

\begin{thm}\label{thm:VN}
Let $\Sigma=(S,P)$ be a $\Gamma$-contraction on a Hilbert space
$\mathcal H$ such that $(S^*,P^*)$ is pure, and suppose that $\dim \mathcal
D_P < \infty$. Denote by $F$ the fundamental operator of
$(S,P)$, and let
$$
\Lambda_{\Sigma} = \{(s,p) \in \Gamma : \det(F^* + pF - sI) =
0\}.
$$
Then for every matrix valued polynomial $f$,
$$
\|f(S,P)\| \leq \max_{(s,p) \in \Lambda_{\Sigma}\cap b\Gamma}
\|f(s,p)\|.
$$
Moreover, when $\omega(F)<1$, $\Lambda_{\Sigma}\cap \mathbb G$ is a
distinguished variety in $\mathbb G$.
\end{thm}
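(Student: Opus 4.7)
The strategy is to realise $(S^*,P^*)$ as the Sarason compression of a $\Gamma$-unitary whose joint spectrum is contained in $\Lambda_\Sigma\cap b\Gamma$, and then transfer the resulting norm estimate from $(S^*,P^*)$ to $(S,P)$ by taking adjoints.

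First, I apply Theorem \ref{modelthm} to the pure $\Gamma$-contraction $(S^*,P^*)$: this produces the minimal $\Gamma$-isometric dilation $(T_\varphi,T_z) = (I\otimes F^* + M_z\otimes F,\, M_z\otimes I)$ on $\mathcal N := H^2(\mathbb D)\otimes \mathcal D_P$, with symbol $\varphi(z) = F^* + Fz$. Proposition \ref{easyprop3} then asserts that $(T_\varphi^*,T_z^*)$ is a $\Gamma$-co-isometric extension of $(S,P)$, so $\mathcal H\subseteq \mathcal N$ is invariant for these adjoints with $T_\varphi^*|_\mathcal H = S$ and $T_z^*|_\mathcal H = P$. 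Enlarging to $\mathcal K := L^2(\mathbb T)\otimes \mathcal D_P$, the multiplication pair $(M_\varphi,M_z)$ is a $\Gamma$-unitary extending $(T_\varphi,T_z)$; the verification that $(M_\varphi,M_z)$ is a commuting pair of normal operators with joint spectrum in $b\Gamma$ is essentially the computation at the start of the proof of Theorem \ref{converse} and uses only $\omega(F)\le 1$. A direct reading off of the joint spectrum gives
\[
\sigma(M_\varphi,M_z) = \{(\lambda,e^{i\theta}) : \det(F^* + e^{i\theta} F - \lambda I)=0\} \subseteq \Lambda_\Sigma\cap b\Gamma.
\]

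For the Sarason step I observe that the invariance of $\mathcal H$ under $(T_\varphi^*,T_z^*)$ is equivalent to the invariance of $\mathcal N\ominus \mathcal H$ under $(T_\varphi,T_z)$, which in turn gives invariance under $(M_\varphi,M_z)$. Combined with the invariance of $\mathcal N$ itself, this exhibits $\mathcal H = \mathcal N \ominus (\mathcal N\ominus \mathcal H)$ as a semi-invariant subspace of $\mathcal K$ for the commuting pair $(M_\varphi,M_z)$, with compressions equal to $(S^*,P^*)$. The multivariable Sarason lemma thus gives $g(S^*,P^*) = P_\mathcal H\, g(M_\varphi,M_z)|_\mathcal H$ for every matrix-valued polynomial $g$, and the normal functional calculus produces
\[
\|g(S^*,P^*)\| \le \max_{(s,p)\in \sigma(M_\varphi,M_z)}\|g(s,p)\| \le \max_{(s,p)\in \Lambda_\Sigma\cap b\Gamma}\|g(s,p)\|.
\]
Applying this estimate to $g(z,w) := \sum A_{ij}^* z^i w^j$ (for a matrix-valued polynomial $f(z,w) = \sum A_{ij} z^i w^j$) and using $g(S^*,P^*) = f(S,P)^*$ together with the action of complex conjugation on $b\Gamma$ (where $\bar s = s/p$ and $\bar p = 1/p$, and a short multiplication of the defining determinant by $\bar p^n$ relates it back to that of $\Lambda_\Sigma$) yields the required inequality $\|f(S,P)\|\le \max_{(s,p)\in\Lambda_\Sigma\cap b\Gamma}\|f(s,p)\|$.

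The ``moreover'' assertion is immediate from Theorem \ref{thm:DVchar}: writing $A := F^*$, the variety $\Lambda_\Sigma = \{(s,p)\in\Gamma : \det(A + pA^* - sI)=0\}$ has $\omega(A) = \omega(F) < 1$, so Theorem \ref{thm:DVchar} asserts that $\Lambda_\Sigma\cap \mathbb G$ is a distinguished variety in $\mathbb G$. The main obstacle is the careful semi-invariance bookkeeping across the three nested subspaces $\mathcal H\subset \mathcal N\subset \mathcal K$, together with the adjoint conversion in the last step; once those pieces are in place, the inequality falls out from a clean combination of Sarason's lemma and the continuous functional calculus for commuting normal operators.
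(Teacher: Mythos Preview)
Your approach is essentially the paper's: both dilate $(S^*,P^*)$ via Theorem~\ref{modelthm} to the pure $\Gamma$-isometry $(T_\varphi,T_z)$ with $\varphi(z)=F^*+zF$, pass to the $\Gamma$-unitary $(M_\varphi,M_z)$ on $L^2(\mathbb T)\otimes\mathcal D_P$, read off the joint spectrum via normality, and convert back from $f^\cup$ to $f$ by complex conjugation. Your semi-invariance/Sarason packaging is a clean rephrasing of the paper's direct co-extension argument, and everything through the estimate $\|g(S^*,P^*)\|\le\max_{(s,p)\in\Lambda_\Sigma\cap b\Gamma}\|g(s,p)\|$ is correct.

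The gap is in your last conjugation step. You claim that for $(s,p)\in b\Gamma$ a multiplication by $\bar p^n$ ``relates the defining determinant back to that of $\Lambda_\Sigma$''. But carrying it out with $\bar p=1/p$ and $\bar s=s/p$ gives
\[
\det(F^*+\bar p F-\bar s I)=\bar p^{\,n}\det\bigl(pF^*+F-sI\bigr)=\bar p^{\,n}\det\bigl(F+pF^*-sI\bigr),
\]
which is the defining determinant of the \emph{conjugate} variety $\Lambda_\Sigma^*=\{(s,p)\in\Gamma:\det(F+pF^*-sI)=0\}$, not of $\Lambda_\Sigma$. In general $\Lambda_\Sigma\cap b\Gamma$ is not conjugation-invariant: already for the scalar $F=i/2$ one has $(i(e^{i\theta}-1)/2,\,e^{i\theta})\in\Lambda_\Sigma\cap b\Gamma$ while its conjugate $(-i(e^{-i\theta}-1)/2,\,e^{-i\theta})$ is not. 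Thus your chain actually terminates at $\max_{\Lambda_\Sigma^*\cap b\Gamma}\|f\|$. (The paper's own proof has a matching $F\leftrightarrow F^*$ slip at this point---note that the abstract states the variety with $\det(F+pF^*-sI)$, and with that convention for $\Lambda_\Sigma$ both your argument and the paper's go through cleanly.)
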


\begin{proof}
By applying the model theorem for pure $\Gamma$-contractions
(Theorem \ref{modelthm}) to $(S^*, P^*)$, we have that $\mathcal H
\subseteq H^2(\mathbb{D}) \otimes \mathcal{D_P}$, where $\mathcal
D_P$ is finite dimensional --- say of dimension $n$ --- and that
$$
S = (I \otimes F^* + M_z \otimes F)^* \big|_{\clH} \textup{ and }P
= (M_z \otimes I)^* \big|_{\clH}\,,
$$
where $F$ is the fundamental operator of $(S,P)$. Let $\varphi$
denote the $\mathcal{L}(\mathcal{D}_P)$-valued function $\varphi(z) = F^* + z F$.
Let $f$ be a matrix valued polynomial and let $f^\cup$ be the
polynomial satisfying $f^\cup(A,B) = f(A^*,B^*)^*$ for all
operators $A$ and $B$. Then
\begin{align*}
\|f(S,P)\| &\leq \|f(M_\varphi^*, M_z^*)\|_{H^2 \otimes \mathcal D_P} \\
&=  \|f^\cup(M_\varphi, M_z)\|_{H^2 \otimes \mathcal D_P} \leq \|f^\cup(M_\varphi,M_z)\|_{L^2 \otimes \mathcal D_P} \\
& = \max_{\theta \in [0,2\pi]} \|f^\cup (\varphi(e^{i\theta}),
e^{i\theta})\| .
\end{align*}
Now for $e^{i\theta} \in \mathbb{T}$, the matrix
$\varphi(e^{i\theta})$ is normal, because $\varphi(e^{i\theta}) =
e^{i\theta/2}(e^{-i\theta/2} F^* + e^{i\theta/2}F)$. Thus
\begin{align*}
\|f^\cup(\varphi(e^{i\theta}),e^{i\theta}I)\| &=
\max \{f^\cup (\lambda, e^{i\theta}) : \lambda \in \sigma(\varphi(e^{i\theta}))\} \\
&= \max \{f^\cup (\lambda, e^{i\theta}) : \det (F + e^{i\theta}F^*
- \lambda I) = 0 \}.
\end{align*}
Define
$$\Lambda_\Sigma = \{(s,p) \in \Gamma : \det(F^* + pF - sI) =
0\}$$
and
\begin{align*}
\Lambda^*_{\Sigma}
&=  \{(s,p) \in \Gamma : \det(F + pF^* - sI) =
0\} \\
&= \{(\overline{s},\overline{p}) : (s,p)
\in \Lambda_{\Sigma}\} .
\end{align*}
We now want to show that if $\det(F + e^{i\theta}F^* - \lambda I) = 0$ then $(\lambda,e^{i\theta}) \in \Gamma$ (and thus $(\lambda,e^{i\theta}) \in \Lambda_\Sigma^*$). Assume, equivalently, that $\det(e^{-i\theta/2}F + e^{i\theta/2}F^* - e^{-i\theta/2}\lambda I) = 0$. Without loss of generality, assume that the self adjoint matrix $e^{-i\theta/2}F + e^{i\theta/2}F^*$ is diagonal. Then if $f_{11}, \ldots, f_{nn}$ denote the diagonal elements of $F$, then for some $i$ we must have
$$e^{-i\theta/2}f_{ii} + e^{i\theta/2}\overline{f_{ii}} = e^{-i\theta/2}\lambda .$$
This gives
$$ \overline{\lambda} e^{i\theta} = f_{ii} + e^{i\theta}\overline{f_{ii}} = \lambda .$$
By (4) of Theorem \ref{Gamma-isometry} we find that $(\lambda,e^{i\theta})$ is a $\Gamma$-isometry, so it is in $\Gamma$.

Putting everything together we obtain
\begin{align*}
\|f(S,P)\| \leq \max_{(s,p)
\in \Lambda^*_{\Sigma} \cap b \Gamma}\|f^\cup(s,p)\| =
\max_{(s,p) \in \Lambda_{\Sigma}\cap b\Gamma}\|f(s,p)\|.
\end{align*}
Finally, when $\omega(F)<1$, the set $\Lambda_{\Sigma}\cap \mathbb
G$ is a distinguished variety by Theorem \ref{thm:DVchar}.
\end{proof}

\begin{cor}\label{cor:VN}
Let $(S,P)$ be a strict $\Gamma$-contraction on a finite dimensional Hilbert space $\mathcal H$.
Then there is a distinguished variety $W \subset \mathbb G$ such
that for every matrix valued polynomial $f$,
$$
\|f(S,P)\| \leq \max_{(s,p) \in \partial W} \|f(s,p)\|.
$$
\end{cor}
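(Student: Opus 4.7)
The plan is to derive the corollary directly from Theorem \ref{thm:VN}. First, I verify that a strict $\Gamma$-contraction on a finite dimensional Hilbert space satisfies all hypotheses of that theorem. By Lemma \ref{lem:strict}, $\|P\|<1$, so both $P^n\to 0$ and $P^{*n}\to 0$ strongly; in particular $(S^*,P^*)$ is pure. Finite dimensionality of $\mathcal H$ gives $\dim\mathcal D_P<\infty$ automatically, and Proposition \ref{prop:nrstrict} yields $\omega(F)<1$ for the fundamental operator $F$ of $(S,P)$.

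Applying Theorem \ref{thm:VN} then produces, for every matrix valued polynomial $f$,
\[
\|f(S,P)\| \;\leq\; \max_{(s,p) \in \Lambda_\Sigma \cap b\Gamma}\|f(s,p)\|,
\]
together with the assertion that $W := \Lambda_\Sigma \cap \mathbb G$ is a distinguished variety in $\mathbb G$, where $\Lambda_\Sigma = \{(s,p)\in\Gamma : \det(F^*+pF-sI)=0\}$. The corollary will follow once we identify $\Lambda_\Sigma \cap b\Gamma$ with $\partial W = \overline{W}\cap b\Gamma$.

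The inclusion $\partial W \subseteq \Lambda_\Sigma \cap b\Gamma$ is immediate, since $\Lambda_\Sigma$ is the zero set of a polynomial intersected with $\Gamma$ and hence is closed in $\Gamma$, so $\overline W \subseteq \Lambda_\Sigma$. For the reverse inclusion, the eigenvector-and-automorphism argument from the first half of the proof of Theorem \ref{thm:DVchar} (with $A=F^*$) shows that any $(s,p) \in \Lambda_\Sigma$ with $|p|<1$ automatically lies in $\mathbb G$, hence in $W$. Thus, given $(s_0,p_0) \in \Lambda_\Sigma \cap b\Gamma$, one would perturb $p$ radially into $\mathbb D$ and follow a continuous branch of eigenvalues of $F^*+pF$ through $(s_0,p_0)$ to produce a sequence in $W$ converging to $(s_0,p_0)$, giving $(s_0,p_0)\in\overline{W}\cap b\Gamma = \partial W$.

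The only technical point I anticipate is the continuous selection of an eigenvalue branch when $s_0$ is a multiple eigenvalue of $F^*+p_0F$; this is standard and is handled by a Puiseux-type parametrization of the irreducible component of $\Lambda_\Sigma$ passing through $(s_0,p_0)$. Everything else is bookkeeping built on top of Theorem \ref{thm:VN} and the two preparatory results about strict $\Gamma$-contractions.
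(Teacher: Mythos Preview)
Your proposal is correct and follows the paper's approach: invoke Theorem \ref{thm:VN} after verifying its hypotheses via Lemma \ref{lem:strict} and Proposition \ref{prop:nrstrict}. The paper's own proof is a single sentence citing exactly these three results; your additional care in identifying $\Lambda_\Sigma \cap b\Gamma$ with $\partial W$ via eigenvalue continuity fills in a detail the paper leaves implicit.
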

\begin{proof}
This follows from the theorem above combined with Lemma
\ref{lem:strict} and Proposition \ref{prop:nrstrict}.
\end{proof}

\begin{thm}\label{last:thm}
Let $\Sigma = (S,P)$ be a $\Gamma$-contraction on a finite dimensional
Hilbert space $\mathcal H$ and let $F$ be the fundamental operator
of $(S,P)$. Then for every matrix valued polynomial $f$,
\begin{equation*}\label{eq:vnlam}
\|f(S,P)\| \leq \max_{(s,p) \in \Lambda_{\Sigma}\cap b\Gamma} \|f(s,p)\|,
\end{equation*}
where $\Lambda_{\Sigma}$ is the following one dimensional variety:
\begin{equation}\label{eq:lambda}
\Lambda_{\Sigma} = \{(s,p) \in \Gamma : \det(F^*+pF - sI) = 0\}.
\end{equation}
\end{thm}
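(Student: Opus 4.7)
The plan is to approximate $(S, P)$ by $\Gamma$-contractions that satisfy the hypotheses of Theorem \ref{thm:VN}, apply that theorem, and pass to the limit. For each $r \in (0, 1)$, set $\Sigma_r := (rS, r^2 P)$. The pair $\Sigma_r$ is a $\Gamma$-contraction since $\rho(\alpha \cdot rS, \alpha^2 \cdot r^2 P) = \rho((\alpha r) S, (\alpha r)^2 P) \geq 0$ for every $|\alpha| \leq 1$; its adjoint is pure because $\|(r^2 P^*)^n\| \leq r^{2n} \to 0$; and $\dim \mathcal D_{r^2 P} \leq \dim \mathcal H < \infty$. Theorem \ref{thm:VN} therefore yields
$$\|f(rS, r^2 P)\| \leq \max_{(s, p) \in \Lambda_{\Sigma_r} \cap b\Gamma} \|f(s, p)\|,$$
where $F_r$ denotes the fundamental operator of $\Sigma_r$ and $\Lambda_{\Sigma_r} = \{(s, p) \in \Gamma : \det(F_r^* + pF_r - sI) = 0\}$.

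As $r \to 1^-$, the left-hand side converges to $\|f(S, P)\|$ by continuity of the polynomial functional calculus in finite dimensions. For the right-hand side, one has the explicit formula $F_r = D_{r^2 P}^{-1}(rS - r^3 S^* P) D_{r^2 P}^{-1}$, since $\|r^2 P\| \leq r^2 < 1$ makes $D_{r^2 P}$ invertible; moreover, the numerical-radius bound $\omega(F_r) \leq 1$ from Theorem \ref{fundamentalop} keeps the family $\{F_r\}_{r<1}$ uniformly bounded in norm. After passing to a convergent subsequence, one obtains a limit matrix $\tilde F$, and the goal is to show that the compact subsets $\Lambda_{\Sigma_r} \cap b\Gamma$ approach $\Lambda_\Sigma \cap b\Gamma$ closely enough (for example, in the Hausdorff sense) that the maxima on the right-hand side converge to $\max_{(s, p) \in \Lambda_\Sigma \cap b\Gamma} \|f(s, p)\|$.

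The main obstacle is precisely the analysis of $F_r$ when $P$ has eigenvalues on $\mathbb T$, because then $D_{r^2 P}^{-1}$ blows up on $\ker D_P$ as $r \to 1^-$. To handle this, decompose $\mathcal H = \mathcal H_u \oplus \mathcal H_c$, where $\mathcal H_u = \ker D_P$: in finite dimensions this is the direct sum of the eigenspaces of $P$ for eigenvalues on $\mathbb T$ (such Jordan blocks must be $1 \times 1$ for $P$ to be a contraction), and since $S$ commutes with $P$ it preserves these generalized eigenspaces, so the decomposition reduces both $S$ and $P$. On $\mathcal H_u$, the restriction is a $\Gamma$-unitary, so $S_u = S_u^* P_u$ by Theorem \ref{Gamma-isometry}; a direct computation using $D_{r^2 P}^2 = (1 - r^4) I$ on $\mathcal H_u$ then gives $F_r|_{\mathcal H_u} = \tfrac{r}{1+r^2} S_u$. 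On $\mathcal H_c$, the operator $P_c$ is a pure contraction with $D_{r^2 P_c}$ uniformly bounded below, so $F_r|_{\mathcal H_c} \to F$, where $F$ is the fundamental operator of $(S, P)$ on $\mathcal D_P = \mathcal H_c$. Matching these limits with the block-diagonal structure of $F_r^* + pF_r - sI$ then identifies the limit variety with $\Lambda_\Sigma$, and combining the $\Gamma$-unitary contribution (treated by the spectral theorem on $\mathcal H_u$) with the bound supplied by Theorem \ref{thm:VN} on $\mathcal H_c$ yields the desired inequality.
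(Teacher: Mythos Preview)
Your overall strategy---approximate by $\Sigma_r = (rS, r^2 P)$, invoke Theorem~\ref{thm:VN}, and pass to the limit---is precisely the paper's approach. The paper's proof is terser: it extracts a convergent subsequence $F_{r_n} \to \tilde F$ on $\mathcal H$, takes limits in $S_n - S_n^*P_n = D_{P_n} F_{r_n} D_{P_n}$ to obtain $S - S^*P = D_P \tilde F D_P$, and then appeals to uniqueness of the fundamental operator to identify $\tilde F$ with $F$, without discussing the mismatch between $\mathcal D_{r^2 P} = \mathcal H$ and $\mathcal D_P$ that you flag.

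Your attempt to address that mismatch contains genuine gaps. First, the claim that $\ker D_P$ coincides with the span of the eigenspaces of $P$ for unimodular eigenvalues is false: for the contraction $P = \left(\begin{smallmatrix} 0 & 1 \\ 0 & 0\end{smallmatrix}\right)$ one has $\ker D_P = \mathrm{span}\{e_2\}$ while $P$ has only the eigenvalue $0$, and this $P$ does occur as the second member of a $\Gamma$-contraction (take $S = 0$). In particular $\ker D_P$ need not be $P$-invariant, so your proposed decomposition need not reduce $(S,P)$. Second, and more seriously, even when the unitary/completely-non-unitary decomposition \emph{does} reduce $(S,P)$, the joint spectrum $\sigma(S_u,P_u)$ of the $\Gamma$-unitary summand has no reason to lie in $\Lambda_\Sigma$: for $S = \mathrm{diag}(2,0)$, $P = \mathrm{diag}(1,0)$ one computes $F = 0$ on $\mathcal D_P \cong \mathbb C$, hence $\Lambda_\Sigma \cap b\Gamma = \{(0,e^{i\theta}):\theta\in\mathbb R\}$, yet $(2,1) \in \sigma(S_u,P_u)$ and the asserted inequality already fails for $f(s,p) = s$. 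Thus your final ``combining'' step cannot deliver the bound as written; this is a real obstruction, and the paper's limiting argument leaves the same point unaddressed.
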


\begin{proof}
Let $\{r_n\}$ be a sequence of positive numbers converging to $1$
from below. Then $\Sigma_n = (S_n,P_n) := (r_nS,r_n^2P)$ is a sequence of
$\Gamma$-contractions satisfying the conditions of Theorem
\ref{thm:VN}. Let $F_n$ denote the fundamental operator of
$\Sigma_n$, and let
\[
\Lambda_{\Sigma_n} = \{(s,p) \in \Gamma : \det(F_n^* + pF_n - sI)
= 0\}.
\]
We consider all $F_n$ as operators on the finite dimensional space
$\mathcal H$, and by passing to a subsequence, we may assume that
the sequence $F_n$ converges to some operator $F$. We proceed to
show that $F$ is the fundamental operator of $\Sigma$.

Since $F_n$ is the fundamental operator of $\Sigma_n$, we have
$$S_n-S_n^*P_n=(I-P_n^*P_n)^{\frac{1}{2}}F_n(I-P_n^*P_n)^{\frac{1}{2}}. $$
Taking the limit as $n\rightarrow \infty$, we get
$$S-S^*P=D_P F D_P.$$
Thus, by the uniqueness of the fundamental
operator, $F$ is indeed the fundamental operator of $\Sigma$.

For a matrix valued polynomial $f$, we have
$$
\|f(S,P)\| = \lim_n \|f(S_n,P_n) \| \leq \limsup_n \max_{(s,p) \in
\Lambda_{\Sigma_n}\cap b\Gamma} \|f(s,p)\|,
$$
where the inequality follows by applying Theorem
\ref{thm:VN} to $\Sigma_n$. It is
now easy to see that
\[
\|f(S,P)\| \leq \limsup_n \max_{(s,p) \in \Lambda_{\Sigma_n}\cap
b\Gamma} \|f(s,p)\| \leq \max_{(s,p) \in \Lambda_{\Sigma}\cap
b\Gamma} \|f(s,p)\| =\|f\|_{\Lambda_{\Sigma}}.
\]

\end{proof}

In Theorem \ref{thm:VN}, we have constructed a
$\Gamma$-co-isometric extension to the $\Gamma$-contraction
$(S,P)$ and the $\Gamma$-co-isometric extension is
$(M_{\varphi},M_z)$ defined on $H^2(\mathbb D) \otimes \mathcal
D_P$, which lives on $\Lambda_{\Sigma}$. By Proposition
\ref{easyprop3}, the adjoint of a $\Gamma$-co-isometric extension
of $(S,P)$ is a $\Gamma$-isometric dilation of $(S^*,P^*)$. So in
particular when $(S,P)$ is a $\Gamma$-contraction on a finite
dimensional Hilbert space, it has a $\Gamma$-isometric dilation
that lives on a one dimensional variety in $\Gamma$.

\end{document}